\documentclass[11pt,reqno]{amsart}
\usepackage{amsmath,amsfonts,amssymb,amsthm,amscd,mathrsfs,mathtools}
\usepackage[T1]{fontenc}
\usepackage[utf8]{inputenc}
\usepackage{geometry}
\usepackage{hyperref}
\hypersetup{colorlinks=true, linkcolor=blue, citecolor=blue, urlcolor=blue}
\usepackage{color}
\usepackage{enumitem}
\usepackage{tikz}
\usepackage{tikz-cd}
\usepackage{mdframed}
\usepackage{parskip}
\setlist[itemize]{leftmargin=*}
\allowdisplaybreaks
\usepackage[normalem]{ulem}

\newcommand{\Z}{\mathbb{Z}}

\newcommand{\C}{\mathbb{C}}

\newcommand{\F}{\mathbb{F}}
\newcommand{\Mat}{\mathrm{Mat}}
\newcommand{\GL}{\mathrm{GL}}
\newcommand{\tr}{\mathrm{tr}}

\usepackage{mathtools}

\DeclarePairedDelimiter\abs{\lvert}{\rvert}%
\DeclarePairedDelimiter\norm{\lVert}{\rVert}%

\makeatletter
\let\oldabs\abs
\def\abs{\@ifstar{\oldabs}{\oldabs*}}
\makeatother

\makeatletter
\let\oldnorm\norm
\def\norm{\@ifstar{\oldnorm}{\oldnorm*}}
\makeatother

\newcounter{definition}[section]
\newenvironment{definition}[2][]{\refstepcounter{definition} \textbf{Definition~\thedefinition #1 -- #2.} \rmfamily}{}

\newcommand{\emphred}[1]{\textcolor{red}{#1}}

\renewcommand\qedsymbol{{\footnotesize\underline{Q.\,E.\,D.}}}
\renewenvironment{proof}{ \medskip  \textbf{Proof.}}{\hspace*{\fill} \qedsymbol \medskip}

\newtheorem{theorem}{Theorem}[section]
\newtheorem{theo}[theorem]{Theorem}
\newtheorem{proposition}[theorem]{Proposition}

\newmdtheoremenv{theoBox}{Theorem}

\title{Spectrum of the Unit-Graph on $\Mat_3(\F_q)$}
\author{Ye\c{s}im Demiro\u{g}lu Karabulut \qquad\\ Heriberto Espinosa}

\begin{document}
\baselineskip=17pt

\begin{abstract}
In this paper, we investigate the spectrum of the unit-graph of the ring of $3\times 3$ matrices over a finite field $\F_q$, which is equivalently the Cayley digraph $\mathrm{Cay}\!\left((\mathrm{Mat}_3(\F_q),+),\mathrm{GL}_3(\F_q)\right)$. This unit-graph has a vertex set $\mathrm{Mat}_3(\F_q)$ with a directed edge from $A$ to $B$ whenever $B-A\in \mathrm{GL}_3(\F_q)$. Then two vertices are adjacent precisely when their difference is invertible. With relevant character theory, we consequently demonstrate that the adjacency spectrum of $\mathrm{Cay}\!\left((\mathrm{Mat}_3(\F_q),+),\mathrm{GL}_3(\F_q)\right)$ consists of four distinct eigenvalues together with their multiplicities. 

Using the Spectral Gap Theorem for Cayley digraphs, we show that if two subsets of vertices in $\mathrm{Mat}_3(\F_q)$ are sufficiently large, then there are matrices in the two subsets whose difference lies in $\mathrm{GL}_3(\F_q)$. In particular, any sufficiently large subset of $\mathrm{Mat}_3(\F_q)$ contains two distinct matrices whose difference has nonzero determinant. This spectral gap implies that large vertex sets cannot avoid each other and must be connected by at least one edge.
\end{abstract}

\maketitle

\section{Introduction and Statement of Results}

Previously, the spectrum---that is, the eigenvalues and their multiplicities---of the adjacency matrix of the unit graph of $\mathrm{Mat}_2(\F_q)$, equivalently $\mathrm{Cay}\big((\mathrm{Mat}_2(\F_q),+),\mathrm{GL}_2(\F_q)\big)$, was determined alongside its spectral gap implications in \cite{KarabulutUnitGraphs}. This naturally motivates the study of cases of higher dimensions $\mathrm{Mat}_n(\F_q)$ for $n>2$, building upon the work established in \cite{KarabulutUnitGraphs} and employing similar approaches and methods. 

Using the discussion and results of Lidl and Niederreiter in \cite{LidlNiederreiterFiniteFields}, their treatment of the absolute trace function, canonical additive characters on finite fields and related character sums avails the extension of additive characters on $\operatorname{Mat}_n(\F_q)$, relying on the trace (map) of matrices in this finite additive group. 

Babai in \cite{Babai_Spectra_of_Cayley_graphs} provides the critical framework for unifying character theory with Cayley graphs, expressing Cayley-graph adjacency spectra in terms of ``irreducible characters'' of a group. This is the theoretic basis for \hyperref[theo:theoremA4]{Theorem A.4} and lays the groundwork on which \hyperref[prop:Prop1.1]{Proposition 1.1} hinge and whose proof and discussion is engaged in Section 4. We set up for \hyperref[prop:Prop1.1]{Proposition 1.1} as follows.

Let $\F_q$ be an arbitrary finite field of order $q$. Let $\mathrm{GL}_3(\F_q)$ be the general linear group of degree $3$ over the finite field $\F_q$, which consists of all $3 \times 3$ invertible matrices with entries in $\F_q$. Consider the unit-graph of the ring of $3\times 3$ matrices over a finite field $\F_q$, which is equivalently the Cayley digraph $\mathrm{Cay}\left((\mathrm{Mat}_3(\F_q),+),\mathrm{GL}_3(\F_q)\right)$. In this paper, we calculate the spectrum of the adjacency matrix of this graph. In particular, we prove the following:

\begin{proposition} \label{prop:Prop1.1}
    Let $B = [b_{ij}] \in \text{GL}_{3}(\F_q)$. The formulae 
    \begin{align*}
        \lambda_{A_0} ={}& \sum_{B \in \text{GL}_{3}(\F_q)}\chi(0) = \abs{\text{GL}_{3}(\F_q)} = q^{9}-q^{8}-q^{7}+q^{5}+q^{4}-q^{3}, \\ 
        \lambda_{A_1} ={}& \sum_{B \in \text{GL}_{3}(\F_q)}\chi(b_{11}) = -q^6 + q^5 + q^4 - q^3, \\
        \lambda_{A_2} ={}& \sum_{B \in \text{GL}_{3}(\F_q)}\chi(b_{11} + b_{22}) = {q^4-q^3},\quad \text{and} \\ 
        \lambda_{A_3} ={}& \sum_{B \in \text{GL}_{3}(\F_q)}\chi(b_{11} + b_{22} + b_{33}) = -q^3
    \end{align*}
    hold for any finite field $\F_q$ and for any non-trivial character $\chi$ on $\F_q$ and are the eigenvalues of the adjacency spectrum of the unit-graph 
    on $\text{Mat}_{3}(\F_q)$.
\end{proposition}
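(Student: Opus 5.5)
The plan is to use the standard description of the spectrum of a Cayley digraph on an abelian group, as in Theorem A.4 (Babai). The additive characters of $(\Mat_3(\F_q),+)$ are $\psi_A(X)=\chi(\tr(AX))$ for $A\in\Mat_3(\F_q)$, where $\chi$ is a fixed non-trivial additive character of $\F_q$ (Lidl--Niederreiter). The eigenvalue attached to $\psi_A$ is therefore $\lambda_A=\sum_{B\in\GL_3(\F_q)}\chi(\tr(AB))$.

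\textbf{Step 1: reduce to rank normal forms.} Since $\GL_3(\F_q)$ is invariant under $B\mapsto PBQ$ for $P,Q\in\GL_3(\F_q)$, and $\tr(PAQ\,B)=\tr(A\,QBP)$, the value $\lambda_A$ depends only on the rank $r$ of $A$. So we may take $A=A_r=\mathrm{diag}(1,\dots,1,0,\dots,0)$ with $r$ ones. Then $\tr(A_rB)=b_{11}+\cdots+b_{rr}$, which gives exactly the four sums in the statement. The case $r=0$ is trivial and gives $\abs{\GL_3(\F_q)}$.

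\textbf{Step 2: reduce each sum to one count.} For $r\ge1$, let $N_r(t)$ be the number of $B\in\GL_3(\F_q)$ with $b_{11}+\cdots+b_{rr}=t$. Scaling $B\mapsto cB$ with $c\in\F_q^*$ preserves $\GL_3(\F_q)$ and multiplies the partial trace by $c$, so $N_r(t)$ is constant for $t\neq0$. Using $\sum_{t\ne0}\chi(t)=-1$ and $N_r(0)+(q-1)N_r(1)=\abs{\GL_3(\F_q)}$, we get
\[
\lambda_{A_r}=N_r(0)-N_r(1)=\frac{qN_r(0)-\abs{\GL_3(\F_q)}}{q-1}.
\]
This also makes the answer independent of the choice of $\chi$, as claimed.

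\textbf{Step 3: count $N_r(0)$ row by row.}
\begin{itemize}
\item For $r=1$ the condition is $b_{11}=0$. The first row can be any nonzero vector $(0,*,*)$, giving $q^2-1$ choices, and the other two rows can be completed in $(q^3-q)(q^3-q^2)$ ways. Substituting, $qN_1(0)-\abs{\GL_3}=(q^3-q)(q^3-q^2)(1-q)$, so $\lambda_{A_1}=-(q^3-q)(q^3-q^2)=-q^6+q^5+q^4-q^3$, as stated.
\item For $r=2,3$, build $B$ from the last rows first. For $r=2$, fix linearly independent rows $2$ and $3$ spanning a plane $W$. The first row $v$ must satisfy $v_1=-b_{22}$ and $v\notin W$. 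Among the $q^2$ vectors with prescribed first coordinate $c$, the number lying in $W$ is either $q$ (if $e_1^*|_W\neq0$), or $q^2$ or $0$ (if $e_1^*|_W=0$, according as $c=0$ or not).
\item So we must count ordered independent pairs (row 2, row 3) according to whether $e_1^*$ vanishes on their span and whether $b_{22}=0$. This is a finite case split with explicit counts of the form $(q^3-1)(q^3-q)$ restricted to subspaces. The case $r=3$ is analogous: condition on rows $2,3$ and the residual value $-(b_{22}+b_{33})$ imposed on $v_1$.
\end{itemize}

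\textbf{Check and main obstacle.} As a consistency check (or as a fallback determination of $\lambda_{A_2},\lambda_{A_3}$), use the orthogonality relations over all $A$. We have $\sum_A\lambda_A=q^9\cdot[0\in\GL_3]=0$ and $\sum_A\abs{\lambda_A}^2=q^9\abs{\GL_3(\F_q)}$, weighted by the numbers of matrices of each rank. These two equations, combined with the known $\lambda_{A_0},\lambda_{A_1}$, pin down or verify $\lambda_{A_2}=q^4-q^3$ and $\lambda_{A_3}=-q^3$. I expect the main obstacle to be the case analysis in Step 3 for $r=2,3$, since correlated conditions on $b_{22}$, $b_{33}$ and on the subspace $W$ make the bookkeeping error-prone; the moment identities serve as the check.
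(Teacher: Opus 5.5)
Your Steps 1 and 2 and the case $r=1$ are correct, and tidier than the paper. You prove rank-invariance directly instead of citing it. The scaling $B\mapsto cB$ together with $N_r(0)+(q-1)N_r(1)=\abs{\GL_3(\F_q)}$ reduces every eigenvalue to the single count $N_r(0)$. The gap is in how $\lambda_{A_2}$ and $\lambda_{A_3}$ are actually obtained. Your row-by-row count is only outlined, so the values rest on your fallback, and there the claim that the two moment identities pin them down is false. Write $x=\lambda_{A_2}$, $y=\lambda_{A_3}$, insert $\lambda_{A_0},\lambda_{A_1}$ and the rank counts $m_1,m_2,m_3$, and divide by $q(q^3-1)(q^2-1)$. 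The identities become
\[
(q^2+q+1)\,x+q^2(q-1)\,y=q^3(q^2-1),\qquad (q^2+q+1)\,x^2+q^2(q-1)\,y^2=q^6(q-1)(q^3+q^2-1).
\]
This is a line meeting an ellipse in two real points. Besides $(q^4-q^3,-q^3)$ there is a second solution with $x=-q^3(q-1)(q^3-q-1)/(q^3+q+1)$; for $q=2$ it is $(x,y)=(-40/11,\,136/11)$, which satisfies both equations. So the moments verify the answer but do not determine it.

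The repair is short. Your Step 2 gives $\lambda_{A_r}=N_r(0)-N_r(1)\in\Z$, and the spurious root is not an integer. Indeed, $q^3+q+1$ is coprime to $q$. It is coprime to $q^3-q-1$: their difference is $2(q+1)$, and $q^3+q+1$ is odd and $\equiv-1\pmod{q+1}$. It shares at most a factor $3$ with $q-1$. Finally, $q^3+q+1\ge 11$. With this added, your fallback is a genuinely different and shorter route than the paper's. The paper does a long entry-by-entry case count for $\lambda_{A_2}$ and then gets $\lambda_{A_3}$ from the linear identity $\tr(\mathbb{A})=0$ alone. Your second identity, which is just $\tr(\mathbb{A}^2)=q^9\abs{\GL_3(\F_q)}$, replaces the $\lambda_{A_2}$ count entirely. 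Alternatively, finish your Step 3, which does go through. Conditioning on whether rows $2,3$ span $\{x_1=0\}$ gives $N_2(0)=q^3(q-1)^2(q^3+q^2-1)$. Using that exactly $q^3-q^2$ invertible $2\times2$ matrices have trace $0$, it also gives $N_3(0)=q^4(q-1)(q^3-q-1)$. Your formula $\lambda_{A_r}=(qN_r(0)-\abs{\GL_3(\F_q)})/(q-1)$ then yields $q^4-q^3$ and $-q^3$, and this row conditioning is considerably cleaner than the paper's case split.
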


Using the Spectral Gap Theorem for Cayley digraphs (\hyperref[theo:theoremA3]{Theorem A.3}), we show that if two subsets of vertices in $\mathrm{Mat}_3(\F_q)$ are sufficiently large, then there are matrices in the two subsets whose difference lies in $\mathrm{GL}_3(\F_q)$. In particular, we prove the following:

\begin{theorem} \label{theo:theorem1.2}
    If $X,Y\subseteq \Mat_3(\F_q)$ satisfy $\sqrt{\abs{X}\abs{Y}} \;>\; q^6+q^3+2$,
    then there exist $A\in X$ and $B\in Y$ such that $B-A\in \GL_3(\F_q)$. In particular, if $\abs{X}>q^6+q^3+2$, then $X$ contains two distinct matrices whose difference has nonzero determinant. So any set $X$ with $\abs{X}=\Omega(q^6)$ (as $q\to\infty$) cannot avoid such a pair of a matrices.
\end{theorem}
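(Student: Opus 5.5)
The plan is to apply the Spectral Gap Theorem for Cayley digraphs (\hyperref[theo:theoremA3]{Theorem A.3}) to $\Gamma=\mathrm{Cay}\left((\Mat_3(\F_q),+),\GL_3(\F_q)\right)$. Because the group $(\Mat_3(\F_q),+)$ is abelian, $\Gamma$ is normal. By Proposition~\ref{prop:Prop1.1} it is $d$-regular with $d=\abs{\GL_3(\F_q)}$. I expect Theorem A.3 to be of the form
\[
e(X,Y)\;\ge\;\frac{d\,\abs{X}\abs{Y}}{N}-\lambda\sqrt{\abs{X}\abs{Y}},
\]
where $N=q^9$ is the number of vertices, $\lambda$ is the largest absolute value of a nontrivial eigenvalue, and $e(X,Y)$ counts directed edges $A\to B$ with $A\in X$, $B\in Y$. (The same argument works with the version $\bigl|e(X,Y)-\tfrac{d}{N}\abs{X}\abs{Y}\bigr|\le\lambda\sqrt{\abs{X}\abs{Y}}$.) Given this, a pair $A\in X$, $B\in Y$ with $B-A\in\GL_3(\F_q)$ exists as soon as $\sqrt{\abs{X}\abs{Y}}>\lambda N/d$.

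\textbf{Step 1: the nontrivial eigenvalues.} Every nontrivial additive character of $\Mat_3(\F_q)$ has the form $X\mapsto\chi(\tr(CX))$ with $C\neq 0$. Since $\GL_3(\F_q)$ is invariant under left and right multiplication by invertible matrices, the eigenvalue depends only on the rank of $C$. We may therefore take $C$ to be a diagonal idempotent of rank $1$, $2$ or $3$, which yields exactly $\lambda_{A_1},\lambda_{A_2},\lambda_{A_3}$ of Proposition~\ref{prop:Prop1.1}. Hence
\[
\lambda=\max\{q^6-q^5-q^4+q^3,\;q^4-q^3,\;q^3\}=q^6-q^5-q^4+q^3=q^3(q-1)(q^2-1)
\]
for all $q\ge2$.

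\textbf{Step 2: the algebra.} Write $d=q^3(q^3-1)(q^2-1)(q-1)$. Then
\[
\frac{\lambda N}{d}=\frac{q^3(q-1)(q^2-1)\,q^9}{q^3(q^3-1)(q^2-1)(q-1)}=\frac{q^9}{q^3-1}=q^6+q^3+1+\frac{1}{q^3-1}.
\]
This is at most $q^6+q^3+2$, so the hypothesis $\sqrt{\abs{X}\abs{Y}}>q^6+q^3+2$ forces $e(X,Y)>0$. That gives the first claim.

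\textbf{Step 3: the special case $X=Y$.} Edges $A\to B$ with $B-A$ invertible automatically have $A\neq B$, since $0\notin\GL_3(\F_q)$. Taking $Y=X$ therefore gives two distinct matrices in $X$ whose difference has nonzero determinant whenever $\abs{X}>q^6+q^3+2$. The asymptotic remark follows because this threshold is $(1+o(1))q^6$.

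\textbf{Main obstacle.} The delicate point is matching the precise form and normalization of Theorem A.3 as stated in the appendix. This includes whether it requires normality (which holds here, since the group is abelian) and whether $\lambda$ is taken over absolute values of the nontrivial eigenvalues. I will verify this against the appendix statement and then carry out the simple comparison in Step 2.
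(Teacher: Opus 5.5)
Your proposal is correct and is essentially the paper's proof. The paper applies Theorem A.3 with $n_*=\frac{q^9}{\abs{\GL_3(\F_q)}}\abs{\lambda_{A_1}}=\frac{q^9}{q^3-1}<q^6+q^3+2$, which is exactly your $\lambda N/d$, and it handles the case $Y=X$ the same way. Your reading of the final remark as ``threshold $(1+o(1))q^6$'' is the only defensible one, because taken literally with $\Omega(q^6)$ it fails: the $q^6$ matrices with zero first row have pairwise singular differences.
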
    

This spectral gap implies that large vertex sets cannot avoid each other and must be connected by at least one edge.

\section{Background and Setup}

We revisit the equivalence of a Cayley digraph and a unit-graph (\cite{KarabulutUnitGraphs}) and germane definitions to spectral graph theory. Most generally, we can define a \textit{graph} as follows.

\begin{definition}{Graph, Vertices, Edge} A \emphred{graph} $X$ consists of a set of vertices $V(X)$ and a set of edges $E(X)$, where an \emphred{edge} is an unordered pair of distinct \emphred{vertices} of $X$.
\end{definition}

More specifically, we are interested in Cayley digraphs which is the chief concern of this paper.
In particular, for a directed graph---or as a common portmanteau, \textit{digraph} for which there is a directed edge from $x$ to $y$---we can define a Cayley \textit{di}graph.

\begin{definition}{Cayley Digraph}
    Let $H$ be a group with identity and $S$ the group of units in $H$. Then a \emphred{Cayley digraph}, denoted by $\text{Cay}(H,S)$, is defined as the graph with the vertex set $H$, such that if $x,y \in H$, then there exists an edge from $x$ to $y$ in $S$, the edge set of $\text{Cay}(H,S)$, if and only if $y-x \in S$. \cite[p.~11]{Brouwer_Haemers_spectra}
\end{definition}

\begin{definition}{Unit-Graph}
    Let $H$ be a finite ring with identity, and let $S$ denote the set of units. We define the \emphred{unit-graph} $\Gamma$ on $H$ to equal the directed graph whose vertex set is $H$, for which there is a directed edge from $x$ to $y$ if and only if $y-x \in S$. \cite{KarabulutUnitGraphs}
\end{definition}

Note then that a Cayley digraph $\Gamma = \text{Cay}(H,S)$ is equivalent to saying that $\Gamma$ is the unit-graph on $H$.

\begin{definition}{Simple Graph}
    A \emphred{simple graph}, also called a strict graph, is an unweighted, undirected graph containing no graph self-loops, which are edges from a vertex to itself, or multiple edges---two or more edges from a one vertex to another vertex. \cite{WeissteinSimpleGraph} 
\end{definition}

Let $q=p^n$ and consider the additive group $H=(\Mat_3(\F_q),+)$. Let $S=\GL_3(\F_q)$, the set of units in $\Mat_3(\F_q)$. Then the unit-graph on $\Mat_3(\F_q)$ is the Cayley digraph
\[
\Gamma = \mathrm{Cay}\big((\Mat_3(\F_q),+),\GL_3(\F_q)\big),
\]
with an edge between $X$ and $Y$ if and only if $Y-X\in \GL_3(\F_q)$. Our goal is to compute the adjacency spectrum of $\Gamma$ and then apply a spectral gap estimate to large subsets of vertices. In order to do so, we first recall and apply the relevant graph theory and character theory preliminaries. Then we proceed to parameterize all additive characters of $(\Mat_n(\F_q),+)$ to convert the spectral problem into explicit character sums over $\GL_3(\F_q)$, and finally we combine the eigenvalue formulas with the rank-count multiplicities in order to obtain the full spectrum and thereafter analyze its spectral gap consequences.

For the Cayley digraph $\text{Cay}(\text{Mat}_{3}(\F_q), \text{GL}_{3}(\F_q))$, by definition, there exists a (directed) edge from a vertex $X \in \text{Mat}_{3}(\F_q)$ to a vertex $Y \in \text{Mat}_{3}(\F_q)$ if and only if $Y-X \in \text{GL}_{3}(\F_q)$, i.e., if and only if $Y-X$ is invertible. Since this Cayley diagraph can also be understood as an undirected graph (as established in \cite[p.~11]{KarabulutUnitGraphs}), it is the case that $\text{Cay}(\text{Mat}_{3}(\F_q), \text{GL}_{3}(\F_q))$ is an undirected graph. In tandem with \hyperref[prop:PropA2]{Proposition A.2}, this would mean $\text{Cay}(\text{Mat}_{3}(\F_q), \text{GL}_{3}(\F_q))$ is an undirected, simple unit-graph. 

In order to tersely describe whether any pair of vertices in this Cayley digraph possesses an edge between them, we can cleverly inventory the existence of such an edge using conventional binary descriptors of 1 and 0---the former describing if indeed there exists an edge and 0 otherwise---and collect these descriptions of edge existence in a matrix, called the adjacency matrix.

\begin{definition}{Adjacency Matrix}
    Let $G$ be a graph without multiple edges. The \emphred{adjacency matrix} of $G$ is the $0-1$ matrix $\mathbb{A}$ indexed by the vertex set $V(G)$ of $G$ where $A_{xy} = 1$ when there is an edge from $x$ to $y$ in $G$ and $A_{xy} = 0$ otherwise. \cite[p.~1]{Brouwer_Haemers_spectra}
\end{definition}

Consider the unit-graph $\text{Cay}(H,G)$ on $H$. For vertices $h_i,h_j \in H$ for which $i,j \in \abs{H}$, then the adjacency matrix $\mathbb{A} = [a_{ij}]_{\abs{H} \times \abs{H}}$ consists of entries \[a_{ij} = \left\{\begin{array}{cc}
     1    & \text{if } \exists \text{ edge from } h_i \text{ to } h_j\\
     0    & \text{if } \nexists \text{ edge from } h_i \text{ to } h_j\\
    \end{array}\right. = \left\{\begin{array}{cc}
     1    & \text{if } h_j-h_i \in S\\
     0    & \text{if } h_j-h_i \notin S\\
    \end{array}\right.. \]
Then the adjacency matrix of the simple, undirected Cayley digraph $\text{Cay}(\text{Mat}_{3}(\F_q), \text{GL}_{3}(\F_q))$ is $\mathbb{A} = [a_{ij}]_{q^{9} \times q^9}$ where  
\[a_{ij} = \left\{\begin{array}{cc}
     1    & \text{if } h_j-h_i \in \text{GL}_3(\F_q)\\
     0    & \text{if } h_j-h_i \notin \text{GL}_3(\F_q)\\
    \end{array}\right.. \]
Also note that by virtue of the inherent undirected nature of the unit-graph on $\text{Mat}_{3}(\F_q)$ (since the negative of an invertible matrix is still invertible), the adjacency matrix for this Cayley digraph is symmetric. That is, \[\mathbb{A} = [a_{ij}]_{q^{9} \times q^9} =  \left\{\begin{array}{cc}
     1    & \text{if } h_j-h_i \in \text{GL}_3(\F_q)\\
     0    & \text{if } h_j-h_i \notin \text{GL}_3(\F_q)\\
    \end{array}\right. =  \left\{\begin{array}{cc}
     1    & \text{if } h_i-h_j \in \text{GL}_3(\F_q)\\
     0    & \text{if } h_i-h_j \notin \text{GL}_3(\F_q)\\
    \end{array}\right. = [a_{ji}]_{q^{9} \times q^9}. \]

Further, as is common practice with matrices in linear algebra, we can explore the eigenvalues for the adjacency matrix of the Cayley digraph $\text{Cay}(\text{Mat}_{3}(\F_q), \text{GL}_{3}(\F_q))$. This is the exact notion behind the spectrum of a graph. 

\begin{definition}{Spectrum (of a Graph)}
    The \emphred{spectrum} of a graph $X$ is the list of the eigenvalues together with their multiplicities for the adjacency matrix of $X$ (and similarly we refer to the eigenvalues (and their respective multiplicities) of the adjacency matrix of $X$ as the eigenvalues of $X$). \cite[p.~2]{Brouwer_Haemers_spectra}
\end{definition}

Then let us determine the spectrum of the unit-graph on $\text{Mat}_3(\F_q)$. 

\section{\texorpdfstring{Characters on $\F_q$ and on $\Mat_n(\F_q)$}{Characters on Fq and on Matn(Fq)}}

\begin{definition}{Character of an Additive Finite Abelian Group}
Let $G$ be an additive finite abelian group of order $\abs{G}$ with the identity element $0_G$. An \emphred{additive character} $\chi$ of $G$ is a homomorphism from $G$ into the multiplicative group $S^1= \{z \in \C: \abs{z}=1\}$, that is, a mapping $\chi:G \to S^1$ for which $\chi(g_1+g_2) = \chi(g_1)\chi(g_2)$ for all $g_1,g_2 \in G$. \cite[p.~187]{LidlNiederreiterFiniteFields}
\end{definition}

Note since $\chi(0_G) =\chi(0_G + 0_G) = \chi(0_G)\chi(0_G)$, then $\chi(0_G) = 1$.

\begin{definition}{Trivial Character}
    The \emphred{trivial character} $\chi_0$ is defined by $\chi_0(g) = 1$ for all $g \in G$ while any character $\chi_i$ for $i \neq 0$ is called non-trivial. \cite[p.~187]{LidlNiederreiterFiniteFields}
\end{definition}

\bigskip
\begin{definition}{Additive Character of $\F_p$}
    Let $\chi_j: \F_p \to S^1 $ be an \emphred{additive character of} $\F_p$ where $\F_p$ is a prime field. Then it can be shown that $\chi_j$ is of the form $\chi_j(k) = e^{2\pi ijk/p}$ for some fixed $j \in \{0, 1, \cdots p-1 \}$. \cite[p.~187]{LidlNiederreiterFiniteFields}
\end{definition}

The field $\F_q$ for $q = p^n$ is also an additive finite abelian group. The absolute trace function from $\F_q$ to $\F_p$ is defined by
\[\text{Tr}(\alpha) = \alpha + \alpha^p + \alpha^{p^2} + \cdots + \alpha^{p^{n-1}}\]
for every $\alpha \in \F_q$ (\cite[p.~54]{LidlNiederreiterFiniteFields}). Then the canonical additive character of $\F_q$ is then the map $\chi_1 : \F_q \to S^1$ given by
\[\chi_1(\alpha) = e^{2\pi i\, \text{Tr}(\alpha)/p}.\]

\begin{theo}
    For $b \in \F_q$, the map $\chi_b$ with $\chi_b(c) = \chi_1(bc)$ for all $c \in \F_q$ is an additive character of $\F_q$, and every additive character of $\F_q$ is obtained in this way. \cite[p.~190]{LidlNiederreiterFiniteFields}
\end{theo}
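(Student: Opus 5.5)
The plan is to verify the two halves separately: first that each $\chi_b$ is an additive character, then that the $q$ maps $\chi_b$ ($b\in\F_q$) are pairwise distinct and exhaust all additive characters by a counting argument.

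\textbf{Step 1: each $\chi_b$ is a character.} The absolute trace $\text{Tr}:\F_q\to\F_p$ is $\F_p$-linear. It is additive because the Frobenius map $\alpha\mapsto\alpha^p$ is additive in characteristic $p$. Its values lie in $\F_p$ because $\text{Tr}(\alpha)^p=\text{Tr}(\alpha)$, using $\alpha^{p^n}=\alpha$. Since $\text{Tr}(\alpha)$ is a residue mod $p$, the expression $e^{2\pi i\,\text{Tr}(\alpha)/p}$ is well defined. Additivity of $\text{Tr}$ then gives $\chi_1(\alpha+\beta)=\chi_1(\alpha)\chi_1(\beta)$, so $\chi_1$ is a homomorphism into $S^1$. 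Multiplication by $b$ is an additive endomorphism of $\F_q$, so $\chi_b=\chi_1\circ(c\mapsto bc)$ is again a homomorphism into $S^1$.

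\textbf{Step 2: $\chi_1$ is nontrivial.} The polynomial $x+x^p+\cdots+x^{p^{n-1}}$ has degree $p^{n-1}<q$. It therefore has at most $p^{n-1}$ roots and cannot vanish on all of $\F_q$. Pick $\alpha$ with $\text{Tr}(\alpha)\neq 0$ in $\F_p$. Then $\chi_1(\alpha)=e^{2\pi i\,\text{Tr}(\alpha)/p}\neq 1$.

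\textbf{Step 3: the $\chi_b$ are distinct.} Suppose $\chi_b=\chi_{b'}$ with $b\neq b'$. Multiplicativity gives $\chi_1((b-b')c)=1$ for all $c\in\F_q$. As $c$ ranges over $\F_q$, the product $(b-b')c$ ranges over all of $\F_q$, so $\chi_1$ would be trivial, contradicting Step 2. Hence $b\mapsto\chi_b$ is injective, and we obtain $q$ distinct additive characters.

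\textbf{Step 4: there are at most $q$ characters in total.} We show that any finite abelian group $G$ has at most $\abs{G}$ characters. For distinct characters $\psi\neq\psi'$, the product $\phi=\psi\overline{\psi'}$ is a nontrivial character. Choosing $h$ with $\phi(h)\neq1$, reindexing gives $\sum_g\phi(g)=\phi(h)\sum_g\phi(g)$, so $\sum_g\phi(g)=0$. Thus distinct characters are orthogonal, hence linearly independent, in the $\abs{G}$-dimensional space $\C^G$. Applied to $G=\F_q$, there are at most $q$ additive characters, so by Step 3 the family $\{\chi_b\}$ is all of them.

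The only genuinely delicate point is Step 2, the nontriviality of $\chi_1$, i.e.\ the surjectivity (or mere non-vanishing) of the trace; the degree-counting argument above settles it. Everything else is formal.
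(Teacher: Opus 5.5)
Your proof is correct and is essentially the standard argument. The paper does not prove this statement itself but quotes it from Lidl--Niederreiter (Theorem 5.7). That proof, like the paper's own proof of the $\Mat_n(\F_q)$ analogue in Section 3, uses exactly your ingredients: $\chi_b$ is a homomorphism, $b\mapsto\chi_b$ is injective because $\chi_1$ is nontrivial (surjectivity of $\mathrm{Tr}$), and the characters are counted. The only difference is that you make it self-contained: you prove nontriviality of $\mathrm{Tr}$ via the degree bound, and you replace the cited fact that a finite abelian group has exactly $\abs{G}$ characters with your orthogonality argument giving at most $\abs{G}$, which suffices since injectivity already supplies $q$ distinct characters.
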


Hence, an additive character of $\F_q$ can be re-expressed as $\chi_\alpha(\beta) = \chi_1(\text{Tr}(\alpha\beta)) = e^{2\pi i \textit{Tr}(\alpha\beta)/p}$ for all $\beta \in \F_q$ and some $\alpha \in \F_q$. That is, once we fix the canonical additive character of $\F_q$, then every additive character of $\F_q$ can be generated by scaling by some element of $\F_q$. This yields a family of additive characters for $\F_q$ parameterized by each element of $\F_q$. 

\[
\begin{tikzcd}[column sep=huge, row sep=small]
(\mathbb{F}_q,+) \arrow[r, "{\text{ scaled by } b}"]
& (\mathbb{F}_q,+) \arrow[r, "{\chi_1 \circ \text{Tr}}"]
& S^{1} \\
x \arrow[r, maps to] & b x \arrow[r, maps to]
& e^{2\pi i\, \operatorname{Tr}(b x)/p}
\end{tikzcd}
\]

\bigskip
\begin{proposition}
    If $\chi$ is a non-trivial additive character of $\F_q$, then \[\sum_{\alpha \in \F_q^*}\chi(\alpha) = -1.\]
\end{proposition}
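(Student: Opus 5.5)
The plan is to first evaluate the character sum over all of $\F_q$ and then remove the single term $\alpha = 0$. Since $\chi$ is non-trivial, there is some $\beta \in \F_q$ with $\chi(\beta) \neq 1$. Write $S = \sum_{\alpha \in \F_q}\chi(\alpha)$. As $\alpha$ runs over $\F_q$, so does $\alpha + \beta$, because translation by $\beta$ is a bijection of the additive group $(\F_q,+)$. The homomorphism property of $\chi$ (Definition of an additive character) then gives
\[
\chi(\beta)\,S \;=\; \sum_{\alpha \in \F_q}\chi(\alpha+\beta) \;=\; \sum_{\gamma \in \F_q}\chi(\gamma) \;=\; S .
\]
Hence $(\chi(\beta)-1)S = 0$. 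Since $\chi(\beta)\neq 1$, it follows that $S = 0$.

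Next I split off the identity element. The remark following the definition of a character shows that $\chi(0) = 1$, so
\[
0 \;=\; S \;=\; \chi(0) + \sum_{\alpha \in \F_q^*}\chi(\alpha) \;=\; 1 + \sum_{\alpha\in\F_q^*}\chi(\alpha).
\]
Rearranging gives $\sum_{\alpha\in\F_q^*}\chi(\alpha) = -1$, which is the claim.

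There is no real obstacle in this argument. The only point needing care is that non-triviality supplies a $\beta$ with $\chi(\beta)\neq 1$, and that the reindexing $\gamma = \alpha+\beta$ uses only the additive group structure of $\F_q$. Nothing about the explicit form $\chi_b(c)=\chi_1(bc)$ from the preceding theorem is required. That description would offer an alternative route: reduce to $\chi_1$ and use that $\mathrm{Tr}$ is onto $\F_p$ with fibres of equal size $q/p$, so that $S = (q/p)\sum_{k=0}^{p-1} e^{2\pi i k/p} = 0$. The translation argument is shorter and fully general, so that is the one I would write up.
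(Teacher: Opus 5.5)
Your proposal is correct and follows the paper's proof: show that the full sum $\sum_{\alpha\in\F_q}\chi(\alpha)$ vanishes, then subtract $\chi(0)=1$. The only difference is that the paper cites Theorem 5.4 of Lidl--Niederreiter for the vanishing, whereas you prove it inline by the standard translation argument, which is how that theorem is proved.
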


\begin{proof}
    Suppose $\chi$ is a non-trivial additive character of $\F_q$. 
    Then by Theorem 5.4 in \cite[p.~188]{LidlNiederreiterFiniteFields}, $\sum_{\alpha \in \F_q}\chi(\alpha) = 0$. Expanding, $\chi(0) + \sum_{\alpha \in \F_q^*}\chi(\alpha) = 0$. Since $\chi_j(0) = \chi(0) = 1$, then it must be that $\sum_{\alpha \in \F_q^*}\chi(\alpha) = -1$. 
\end{proof}

\underline{Example}. Consider the character $\chi_j: \F_3 \to S^1$ such that $\chi_j(k) = e^{2\pi ijk/3}$ for $k \in \F_3$. From above, we have precisely three characters for $\F_3$ which are $\chi_0$, $\chi_1$, and $\chi_2$. As the trivial character, $\chi_0(k) = 1$ for all $k \in \F_3$. Observe that $\chi_1(k) = e^{2\pi ik/3}$ and $\chi_2(k) = e^{4\pi ik/3}$, so 
\begin{align*}
    \sum_{\alpha \in \F_3^*}\chi_1(\alpha) = \chi_1(1) + \chi_1(2) = e^{\frac{2\pi i}{3}} + e^{\frac{4\pi i}{3}} ={}& \left[\cos\left(\frac{2\pi}{3}\right)+ i\sin\left(\frac{2\pi}{3}\right)\right] + \left[\cos\left(\frac{4\pi}{3} \right)+ i\sin\left(\frac{4\pi}{3}\right)\right]  \\
    ={}& \left(-\frac{1}{2} + \frac{\sqrt{3}}{2}i\right) + \left(-\frac{1}{2} - \frac{\sqrt{3}}{2}i\right) = -1, \\
    \sum_{\alpha \in \F_3^*}\chi_2(\alpha) = \chi_2(1) + \chi_2(2) = e^{\frac{4\pi i}{3}} + e^{\frac{8\pi i}{3}} ={}& \left[\cos\left(\frac{4\pi}{3}\right)+ i\sin\left(\frac{4\pi}{3}\right)\right] + \left[\cos\left(\frac{8\pi}{3} \right)+ i\sin\left(\frac{8\pi}{3}\right)\right]  \\
    ={}& \left(-\frac{1}{2} - \frac{\sqrt{3}}{2}i\right) + \left(-\frac{1}{2} + \frac{\sqrt{3}}{2}i\right) = -1. 
\end{align*}

Similarly, we can define a/an (additive) character of the additive finite abelian group $\text{Mat}_n(\F_q)$.

\begin{definition}{Additive Character of the Finite Group $\text{\normalfont{Mat}}_n(\F_q)$}
    For the trace of a matrix in $\text{Mat}_n(\F_q)$ defined as a map $\text{tr}: \text{Mat}_n(\F_q) \to \F_q$ and an additive character of the finite group $\F_q$ for which $\chi_j \circ \text{Tr}: \F_q \to S^1$, the composition $(\chi_j \circ \text{Tr}) \circ \text{tr}: \text{Mat}_n(\F_q) \to S^1$ is an \emphred{additive character of the finite group} $\text{Mat}_n(\F_q)$ and is defined by $\chi_j(\text{Tr}(\text{tr}(B))) = e^{2\pi i j \text{Tr}(\text{tr}(B))/p}$ for some fixed $j \in \Z_p$ and every $B \in \text{Mat}_n(\F_q)$.
\end{definition}

For $j=1$, we will refer to $\chi_1(\text{Tr}(\text{tr}(B))) = e^{2\pi i \text{Tr}(\text{tr}(B))/p}$ as the \textit{canonical additive character of $\text{\normalfont{Mat}}_n(\F_q)$}. More simply, $\chi_1(\operatorname{tr}(B)) = e^{2\pi i \text{Tr}(\text{tr}(B))/p}$ for $B \in \text{Mat}_n(\F_q)$, or even more directly, we have $\chi_1(B) = e^{2\pi i \text{Tr}(\text{tr}(B))/p}$ for $B \in \text{Mat}_n(\F_q)$.

\begin{proposition}
    For $A \in \text{\normalfont{Mat}}_n(\F_q)$, the map $\chi_A$ with $\chi_A(B) = \chi_1(\text{\normalfont{tr}}(AB))$ for every $B \in \text{\normalfont{Mat}}_n(\F_q)$ is an additive character of $\text{\normalfont{Mat}}_n(\F_q)$, and every additive character of $\text{\normalfont{Mat}}_n(\F_q)$ is generated in this manner.
\end{proposition}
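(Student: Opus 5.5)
The plan is to prove two facts about the map $A \mapsto \chi_A$ from $\Mat_n(\F_q)$ to the character group $\widehat{\Mat_n(\F_q)}$. First, each $\chi_A$ is a character. Second, the map is an injective group homomorphism. Since a finite abelian group $G$ has exactly $\abs{G}$ characters, injectivity gives a bijection onto all $q^{n^2}$ characters of $\Mat_n(\F_q)$.

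\emph{Step 1 (each $\chi_A$ is a character).} Here $\chi_1$ is the canonical additive character of $\F_q$. For $B,C \in \Mat_n(\F_q)$, matrix multiplication is distributive and the trace is $\F_q$-linear, so $\tr(A(B+C)) = \tr(AB)+\tr(AC)$. Applying $\chi_1$ gives $\chi_A(B+C)=\chi_A(B)\chi_A(C)$, because $\chi_1$ is a homomorphism $(\F_q,+)\to S^1$. The values lie in $S^1$, so $\chi_A$ is an additive character. The same computation shows $\chi_{A+A'} = \chi_A\chi_{A'}$, so $A\mapsto \chi_A$ is a homomorphism into the character group under pointwise multiplication.

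\emph{Step 2 (injectivity).} This is the only substantive step. It suffices to show that if $\chi_A$ is trivial then $A=0$. Suppose $A=[a_{ij}]\neq 0$, say $a_{kl}\neq 0$. Take $B = cE_{lk}$, where $E_{lk}$ is the matrix unit and $c\in\F_q$. Then $\tr(AB) = c\,a_{kl}$, so $\chi_A(cE_{lk}) = \chi_1(c\,a_{kl})$. As $c$ ranges over $\F_q$, the element $c\,a_{kl}$ ranges over all of $\F_q$. Since $\chi_1$ is non-trivial (the absolute trace $\mathrm{Tr}:\F_q\to\F_p$ is surjective, which is standard from Lidl--Niederreiter), some $c$ gives $\chi_A(cE_{lk})\neq 1$. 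So $\chi_A$ is non-trivial, and the kernel of $A\mapsto\chi_A$ is $\{0\}$.

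\emph{Step 3 (counting).} The character group of a finite abelian group $G$ has order $\abs{G}$. One can invoke this directly, or derive it from the $\F_q$ case: $\Mat_n(\F_q)\cong \F_q^{n^2}$ as additive groups, and the characters of a direct product are products of characters of the factors. Combined with the theorem for $\F_q$ quoted above from \cite[p.~190]{LidlNiederreiterFiniteFields}, this gives exactly $q^{n^2}$ characters. The map of Step 2 is an injection between two sets of size $q^{n^2}$, hence surjective. Therefore every additive character of $\Mat_n(\F_q)$ equals $\chi_A$ for a unique $A$.

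There is an alternative, more concrete route to surjectivity that avoids the general counting fact. Write an arbitrary character $\psi$ as $\psi(B)=\prod_{i,j}\psi(b_{ij}E_{ij})$. Each map $c\mapsto \psi(cE_{ij})$ is a character of $\F_q$, so by the quoted theorem it equals $\chi_1(a_{ji}c)$ for some $a_{ji}\in\F_q$. Setting $A=[a_{ij}]$ then gives $\psi(B)=\chi_1\big(\sum_{i,j} a_{ji}b_{ij}\big)=\chi_1(\tr(AB))$.

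I expect no real obstacle. The only point needing care is Step 2's reliance on $\chi_1$ being non-trivial, which reduces to surjectivity of the absolute trace.
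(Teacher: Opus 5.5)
Your proof is correct and follows the same route as the paper. Both check the homomorphism property via linearity of the trace, prove $A\mapsto\chi_A$ injective by testing against scalar multiples of a single matrix unit, and get surjectivity by counting the $q^{n^2}$ characters. Your injectivity step is in fact the sound version of the paper's: letting $c$ range over all of $\F_q$ and using that $\chi_1$ is non-trivial is what is needed, whereas the paper pulls a general $b\in\F_q$ out of the absolute trace (valid only for $b\in\F_p$) and then infers $a_{ij}=0$ from $\mathrm{Tr}(a_{ij})=0$, which fails when $q\neq p$.
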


\begin{proof}
    For $B_1, B_2 \in \text{Mat}_n(\F_q)$, we have 
    \begin{align*}
        \chi_A(B_1 + B_2) = \chi_1(\text{tr}(A(B_1 + B_2))) ={}& \chi_1(\text{tr}(AB_1 + AB_2))\\
        ={}& \chi_1(\text{tr}(AB_1) + \text{tr}(AB_2)) \\
        ={}& \chi_1(\text{tr}(AB_1))\chi_1(\text{tr}(AB_2)) = \chi_A(B_1)\chi_A(B_2).
    \end{align*}
    In order to demonstrate the second statement of the theorem, we must show that the sets $A$ and $\chi_A$ for $A \in \text{Mat}_n(\F_q)$ are equipollent, i.e., that the map $A \to \chi_A$ is bijective. Observe that by Theorem 5.5 \cite[p.~189]{LidlNiederreiterFiniteFields}, the number of characters of the finite abelian group $\text{Mat}_n(\F_q)$ is equal to $\abs{\text{Mat}_n(\F_q)}$ which is equal to $q^{n^2}$. In other words, $\abs{A} = \abs{\chi_A}$. It remains to show that map $A \to \chi_A$ is injective and surjective. 

    \indent Suppose that $\chi_A(B) = \chi_{A'}(B)$ for every $B \in \text{Mat}_n(\F_q)$. We must show that $A = A'$. We have $\chi_1(\text{tr}(AB)) = \chi_1(\text{tr}(A'B))$, implying that $\chi_1(\text{tr}(AB))[\chi_1(\text{tr}(A'B))]^{-1} = 1$. Since $\chi_1$ is a homomorphism, then it is the case that $\chi_1(\text{tr}(AB) - \text{tr}(A'B)) = 1$, so $\chi_1(\text{tr}(AB - A'B)) = 1$. Since $\chi_1(\text{tr}(k)) = 1$ if $\text{Tr}(\text{tr}(k)) =0$, then $\text{Tr}(\text{tr}(AB - A'B)) = 0$, or equivalently, we have $\text{Tr}(\text{tr}((A - A')B)) = 0$. Now let $A-A' = [a_{ij}]$ where $a_{ij} \in \F_q$ and fix $B = [b_{ij}]$ where $b_{ij} = 0$ for every $i,j \in \{1,2,\ldots, n\}$ except for some $i$ and $j$ for which $b_{ij}= b$ with some non-zero $b \in \F_q$ (that is, $B$ has only one non-zero entry $b \in \F_q$). Then, $\text{tr}((A - A')B) = a_{ij}b$, so via substitution, $\text{Tr}(a_{ij}b) = 0$. By Theorem 2.23.(ii) \cite[p.~55]{LidlNiederreiterFiniteFields}, $b\text{Tr}(a_{ij}) = 0$. Since $b \neq 0$, it must be that $\text{Tr}(a_{ij}) = 0$, and $a_{ij}=0$ by the non-degeneracy of the absolute trace function. That is, $A-A' = [0]$. Hence, $A = A'$.
    
    \indent Since we have a set of $q^{n^2}$ distinct characters for $\chi_A$ which is equal to all the possible additive characters of $\text{Mat}_n(\F_q)$, an injective map must also be surjective, meaning that every possible character is indeed generated by some matrix $A \in \text{Mat}_n(\F_q)$. That is, the set $\{\chi _{A}\mid A\in \text{Mat}_{n}(\F_{q})\}$ must contain every additive character of $\text{Mat}_{n}(\F_{q})$.
\end{proof}

Hence, for a fixed $A \in \text{Mat}_n(\F_q)$, an additive character of $\text{Mat}_n(\F_q)$ can be re-written as $\chi_{A}(B) = \chi_1(AB) = e^{2\pi i \text{Tr}(\text{tr}(AB))/p}$ for every $B \in \text{Mat}_{n}(\F_{q})$. Then similar to the additive character of $\F_q$, once we fix the canonical additive character of $\F_q$ for the additive character of $\text{Mat}_n(\F_q)$, then every additive character of $\text{Mat}_n(\F_q)$ can be generated by ``scaling'' by some element of $\text{Mat}_n(\F_q)$. This yields a family of additive characters for $\text{Mat}_n(\F_q)$ parameterized by each of the matrix elements of $\text{Mat}_n(\F_q)$.
\[
\begin{tikzcd}[column sep=huge, row sep=small]
(\mathrm{Mat}_n(\mathbb{F}_q), +) 
\arrow[r, "{\text{ multiply by } A}"]
& (\mathrm{Mat}_n(\mathbb{F}_q), +) 
\arrow[r, "{\operatorname{tr}}"]
& (\mathbb{F}_q, +)
\arrow[r, "{\chi_1}"]
& S^{1} \\
B \arrow[r, maps to] 
& AB \arrow[r, maps to]
& \operatorname{tr}(AB) \arrow[r, maps to]
& e^{2\pi i\, \operatorname{Tr}(\operatorname{tr}(AB))/p}
\end{tikzcd}
\]

\section{Eigenvalues from Character Sums}

We introduce the following in order to aid computing the spectrum of the unit-graph on $\text{Mat}_{3}(\F_q)$. By \hyperref[theo:theoremA4]{Theorem A.4}, we can derive the eigenvalues of the adjacency matrix of a unit-graph 
via character sums. That is, for any character on $H$ of the Cayley digraph $\Gamma=\text{Cay}(H,S)$, the associated eigenvalue for its adjacency matrix is $\sum_{s \in S}\chi(s)$.

A key simplification is that the eigenvalue attached to a character $\chi_A(B)=\chi_1(AB)$ depends only on the rank of $A$ which is established in the proof of Proposition 3.2 of \cite{KarabulutUnitGraphs}. Thus matrices of the same rank correspond to the same eigenvalue for the adjacency matrix of $\Gamma=\text{Cay}(H,S)$. We therefore choose the convenient rank representatives
\[
A_0 = \left[\begin{array}{@{\mkern5mu} rrr @{\mkern7mu}}
                  0 & 0 & 0  \\ 
                  0 & 0 & 0  \\
                  0 & 0 & 0  \\
                \end{array}\right],\quad
A_1 = \left[\begin{array}{@{\mkern5mu} rrr @{\mkern7mu}}
                  1 & 0 & 0 \\ 
                  0 & 0 & 0 \\
                  0 & 0 & 0 \\
                \end{array}\right],\quad
A_2 = \left[\begin{array}{@{\mkern5mu} rrr @{\mkern7mu}}
                  1 & 0 & 0 \\ 
                  0 & 1 & 0 \\ 
                  0 & 0 & 0 \\
                \end{array}\right],\quad \text{and} \quad
A_3 = \left[\begin{array}{@{\mkern5mu} rrr @{\mkern7mu}}
                  1 & 0 & 0 \\ 
                  0 & 1 & 0 \\ 
                  0 & 0 & 1 \\
                \end{array}\right].
\]
For $B=[b_{ij}]\in \GL_3(\F_q)$ we then have
\[
\tr(A_0B)=0,\qquad \tr(A_1B)=b_{11},\qquad \tr(A_2B)=b_{11}+b_{22},\qquad \tr(A_3B)=b_{11}+b_{22}+b_{33}.
\]
Consequently, the eigenvalues of the unit-graph on $\Mat_3(\F_q)$ are explicit character sums over $\GL_3(\F_q)$ indexed by the possible ranks $0,1,2$, and $3$.

This enables us to derive four distinct eigenvalues---each associated with the aforementioned ranks---for the adjacency matrix of 
$\mathrm{Cay}\big((\mathrm{Mat}_3(\F_q),+),\mathrm{GL}_3(\F_q)\big)$, all of which are computed explicitly and expressed in \hyperref[prop:Prop1.1]{Proposition 1.1}. Our approach in proving this proposition combines character sums, a detailed case analysis of matrix types, combinatorial arguments, symmetry considerations, and standard linear algebra techniques.

\begin{proof}
    By Proposition 3.2 in \cite{KarabulutUnitGraphs}, we know the unit-graph on $\text{Mat}_{3}(\F_q)$ has at most $4$ distinct eigenvalues. We will show that there are precisely $4$ distinct eigenvalues. Since the eigenvalue depends only on the rank of the matrix parameter $A$ (again, this is established in the proof of Proposition 3.2 of \cite{KarabulutUnitGraphs}), it suffices to work with the rank representatives $A_0,A_1,A_2,A_3$ described above. We can first evaluate $\lambda_{A_0}$ and $\lambda_{A_1}$ directly, then reducing $\lambda_{A_2}$ to a structured count with diagonal constraints, and finally obtaining $\lambda_{A_3}$ from the trace–eigenvalue identity. We follow that same logic here.
    
    \smallskip
    \noindent\textbf{Derivation of $\lambda_{A_0}$.} By \hyperref[theo:theoremA4]{Theorem A.4},
    \begin{align*}
        \lambda_{A_0} = \sum_{B \in \text{GL}_{3}(\F_q)}\chi(\text{tr}(A_0B))
        = \sum_{B \in \text{GL}_{3}(\F_q)}\chi(\text{tr}(\textbf{0}_{3 \times 3}))
        = \sum_{B \in \text{GL}_{3}(\F_q)}\chi(0) 
        ={}& \sum_{B \in \text{GL}_{3}(\F_q)}1 \\
        ={}& \abs{\text{GL}_{3}(\F_q)} \\
        ={}& (q^3-1)(q^3-q)(q^3-q^2) \\
        ={}& q^{9}-q^{8}-q^{7}+q^{5}+q^{4}-q^{3}.
    \end{align*} Note by Equation 1 in \cite[p.~3]{KarabulutUnitGraphs}, $\abs{\text{GL}_{3}(\F_q)} = (q^3-1)(q^3-q)(q^3-q^2)$.

    \smallskip
    \noindent\textbf{Derivation of $\lambda_{A_1}$.} By \hyperref[theo:theoremA4]{Theorem A.4}, 
    \begin{align*}
        \lambda_{A_1} = \sum_{B \in \text{GL}_{3}(\F_q)}\chi(\text{tr}(A_1B)) = \sum_{B \in \text{GL}_{3}(\F_q)}\chi(b_{11}).
    \end{align*}
    Now let $N(\alpha) = \abs{\{B \in \text{GL}_{3}(\F_q): b_{11} = \alpha\}}$ for every $\alpha \in \F_q$. Since each $\chi(b_{11})$ depends solely on $b_{11}$, we can group matrices according to the same value of $b_{11}$. Hence, 
    \[\lambda_{A_1} = \sum_{B \in \text{GL}_{3}(\F_q)}\chi(b_{11}) = \sum_{\alpha \in \F_q}\sum_{B \in \text{GL}_{3}(\F_q) : b_{11} = \alpha}\chi(\alpha) = \sum_{\alpha \in \F_q}\chi(\alpha)\sum_{B \in \text{GL}_{3}(\F_q) : b_{11} = \alpha}1 = \sum_{\alpha \in \F_q}\chi(\alpha) N(\alpha).\]
    By argument of symmetry as all non-zero values of $b_{11}$ occur equally, for every $\alpha \in \F_q^*$, $N(\alpha) = N(1)$. Then 
    \[\lambda_{A_1} = \sum_{\alpha \in \F_q}\chi(\alpha) N(\alpha) = \chi(0)N(0) + N(1)\sum_{\alpha \in \F_q^*}\chi(\alpha) = N(0) - N(1).\] 
    We can compute both $N(0)$ and $N(1)$ by similarly reasoning as with the calculation of the order of the group $\text{GL}_{3}(\F_q)$ which the writer of \cite[p.~3]{KarabulutUnitGraphs} carefully explains in the first paragraph of Section 2.  We can compute $N(0)$ ``by using the fact that a matrix is invertible if and only if its columns (or rows) are linearly independent.'' If $B \in \text{GL}_{3}(\F_q)$ and $B = [v_1 \,\, v_2 \,\, v_3]$ for some column vectors $v_1, v_2, v_3 \in \F_q^3$ where $v_i = (v_{1i}, v_{2i}, v_{3i})$, then $v_1$ can be anything but the zero vector. As $v_{11} = 0$ for $N(0)$, there are $q$ possibilities for $v_{21}$ as well as $q$ possibilities for $v_{31}$ however both cannot be zero or this would result in the zero vector, effectively eliminating one $0$ as a possibility. This means there are $q^2-1$ possibilities for $v_1$. Furthermore, $v_2$ can be anything but a scalar multiple of $v_1$, eliminating $q$ possibilities. Since there are $q$ possibilities each for $v_{12}$, $v_{22}$, and $v_{32}$, this means there are $q^3-q$ possibilities for $v_2$. Lastly, $v_3$ can be anything but a linear combination of $v_1$ and $v_2$, eliminating $q\cdot q = q^2$ possibilities. Since there are $q$ possibilities each for $v_{13}$, $v_{23}$, and $v_{33}$, then this means there are $(q^3-q^2)$ possibilities for $v_3$. By the fundamental counting principle, it follows that $N(0) = (q^2-1)(q^3-q)(q^3-q^2)$. 

    \indent Moving to computing $N(1)$ for which $v_{11} = 1$, then for $v_1$, there are $q$ possibilities each for $v_{21}$ and $v_{31}$ for a total of $q\cdot q = q^2$ options for $v_1$. Via the same reasoning for $v_2$ and $v_3$ as with $N(0)$, there are $q^3-q$ possibilities for $v_2$ and $q^3-q^2$ possibilities for $v_3$ with $N(1)$. Thus, it follows that $N(1) = q^2(q^3 -q)(q^3-q^2)$. Then 
    \[\lambda_{A_1} = N(0) - N(1) = (q^2-1)(q^3-q)(q^3-q^2) - q^2(q^3 -q)(q^3-q^2) = -q^6 + q^5 + q^4 - q^3.\]

    \smallskip
    \noindent\textbf{Derivation of $\lambda_{A_2}$.} By \hyperref[theo:theoremA4]{Theorem A.4},
    \[\lambda_{A_2} = \sum_{B \in \text{GL}_{3}(\F_q)}\chi(\text{tr}(A_2B)) = \sum_{B \in \text{GL}_{3}(\F_q)}\chi(b_{11} + b_{22}).\]
    Now let $N(\alpha,\beta) = \abs{\{B \in \text{GL}_{3}(\F_q): b_{11} = \alpha, b_{22} = \beta\}}$ for every $\alpha, \beta \in \F_q$. Since each $\chi(b_{11} + b_{22})$ depend on the values of $b_{11} + b_{22}$, we can group matrices according to the same values of $b_{11} + b_{22}$. Hence, 
    \begin{align*}
        \lambda_{A_2} = \sum_{B \in \text{GL}_{3}(\F_q)}\chi(b_{11} + b_{22})
        ={}& \sum_{\alpha,\beta \in \F_q} \sum_{B \in \text{GL}_{3}(\F_q): b_{11} = \alpha, b_{22} = \beta}\chi(\alpha + \beta) \\
        ={}& \sum_{\alpha,\beta \in \F_q} \chi(\alpha + \beta)\sum_{B \in \text{GL}_{3}(\F_q): b_{11} = \alpha, b_{22} = \beta}1 \\
        ={}& \sum_{\alpha,\beta \in \F_q} \chi(\alpha + \beta) N(\alpha,\beta).
    \end{align*}
    We also consider that $N(\alpha, \beta)$ depends on whether each $\alpha + \beta$ is zero or non-zero. Thus, 
    \begin{align*}
        \lambda_{A_2} = \sum_{\alpha,\beta \in \F_q} \chi(\alpha + \beta) N(\alpha,\beta) ={}& \sum_{\alpha,\beta \in \F_q: \alpha + \beta = 0} \chi(\alpha + \beta) N(\alpha,\beta) \,\,+ \sum_{\alpha,\beta \in \F_q: \alpha + \beta \neq 0} \chi(\alpha + \beta) N(\alpha,\beta) \\
        ={}& \sum_{\alpha,\beta \in \F_q: \alpha + \beta = 0}N(\alpha,\beta) \,\,+ \sum_{\omega \in \F_q^*}\sum_{ \alpha + \beta = \omega} \chi(\omega) N(\alpha,\beta)
    \end{align*}
    We will consider the possible cases for the first summation $\sum_{\alpha,\beta \in \F_q: \alpha + \beta = 0}N(\alpha,\beta)$. 

    \textbf{Case I}. $\left[\begin{array}{@{\mkern3mu} rrr @{\mkern7mu}}
                  0 & * & *  \\ 
                  * & 0 & *  \\
                  * & * & *  \\
                \end{array}\right]$.

    We then divide into further sub-cases as the first entry of the second row can be zero or non-zero or the second entry of the first row can also be zero or non-zero.
    
    \textbf{Case I}.\textbf{a}.\textbf{i}) 
    $\left[\begin{array}{@{\mkern3mu} rrr @{\mkern7mu}}
                  0 & 0 & *  \\ 
                  0 & 0 & *  \\
                  * & * & *  \\
                \end{array}\right]$.
    However, in this case, the matrices of this form would not be in the $3 \times 3$ general linear group over $\F_q$ since the second column would be a scalar multiple of the first column (or vice-versa), thus eliminating any possible count to the summation.   

    \textbf{Case I}.\textbf{a}.\textbf{ii}) For $\gamma \in \F_q$ such that $\gamma \neq 0$, we have matrices of the form: 
    $\left[\begin{array}{@{\mkern3mu} crr @{\mkern7mu}}
                  0 & 0 & *  \\ 
                  \gamma & 0 & *  \\
                  * & * & *  \\
                \end{array}\right]$.
    If the first entry of the second row is non-zero, considering the second column, there are $q-1$ choices as the third entry of the second column cannot be zero (otherwise the matrices of this form will not be in $\text{GL}_{3}(\F_q)$ since this would clearly yield a determinant of 0). Then for the first column, there are $q(q-1)$ choices which leaves $(q^3-q^2)$ possibilities for the third column. In other words, there are $(q-1)[q(q-1)](q^3-q^2)$ possible matrices of this form for this subcase.   

    \textbf{Case I}.\textbf{b}. For $\gamma \in \F_q$ such that $\gamma \neq 0$, we have matrices of the form: $\left[\begin{array}{@{\mkern3mu} rrr @{\mkern7mu}}
                  0 & \gamma & *  \\ 
                  * & 0 & *  \\
                  * & * & *  \\
                \end{array}\right]$.
    There are $(q^2-1)[q(q-1)](q^3-q^2)$ possible matrices of this form for this subcase.

    \textbf{Case II}. Now for $\alpha \in \F_q^*$, we consider matrices of the following form: $\left[\begin{array}{@{\mkern3mu} rrr @{\mkern7mu}}
                  \alpha & * & *  \\ 
                  * & -\alpha & *  \\
                  * & * & *  \\
                \end{array}\right]$.
    By argument of symmetry, we claim that matrices of this form occur equally as for matrices of the same form with $\alpha = 1$, i.e., for $N(\alpha,-\alpha) = N(1,-1)$. Since this holds for any fixed $\alpha \in \F_q^*$, then to count the number of possible matrices for every non-zero $\alpha \in \F_q$, we must multiply by $q-1$. We then divide into further sub-cases as the first entry of the second row can be zero or non-zero.

    \textbf{Case II}.\textbf{a}. $\left[\begin{array}{@{\mkern3mu} rrr @{\mkern7mu}}
                  1 & * & *  \\ 
                  0 & -1 & *  \\
                  * & * & *  \\
                \end{array}\right]$.
    There are $q(q^2)(q^3-q^2)$ possible matrices of this form for this subcase. Note the second column will never be a scalar multiple of the first column due to the second entries of the first and second column, i.e., $t\cdot 0 = -1$ is not possible for any $t \in \F_q$. 

    \textbf{Case II}.\textbf{b}. For $\gamma \in \F_q$ such that $\gamma \neq 0$, we have matrices of the form: $\left[\begin{array}{@{\mkern3mu} rrr @{\mkern7mu}}
                  1 & * & *  \\ 
                  \gamma & -1 & *  \\
                  * & * & *  \\
                \end{array}\right]$.
    Since $\gamma$ is non-zero, there are $[q(q-1)](q^2-1)(q^3-q^2)$ possible matrices of this form for this subcase. 

    Hence, 
    \begin{align*}
        \sum_{\alpha,\beta \in \F_q: \alpha + \beta = 0}N(\alpha,\beta) ={}& N(0,0) + \sum_{\alpha \in \F_q^*}N(\alpha,-\alpha) \\
        ={}& N(0,0) +(q-1)N(1,-1)
        \\
        ={}& (q-1)[q(q-1)](q^3-q^2)\, + \,(q^2-1)[q(q-1)](q^3-q^2)\, + \\ 
        &\hspace{30mm} (q-1)\left[q(q^2)(q^3-q^2) \, + \, [q(q-1)](q^2-1)(q^3-q^2)\right] \\
        ={}& q^3(q-1)^2(q^3+q^2-1) \\
        ={}& q^8 - q^7 - q^6 + 2 q^4 - q^3.
    \end{align*}
    Now we will consider the possible cases for the remaining summation $\sum_{\omega \in \F_q^*}\sum_{ \alpha + \beta = \omega} \chi(\omega) N(\alpha,\beta)$. 
    
    By a similar argument of symmetry, we can simplify these cases of non-zero $\omega$ with $\omega$ equaling $1$ due to the equal occurrence of each non-zero elements of $\F_q$; we strategically choose the non-zero element $1$ in $\F_q$. Hence, 
    \begin{align*}
        \sum_{\omega \in \F_q^*}\sum_{ \alpha + \beta = \omega} \chi(\omega) N(\alpha,\beta) ={}& \sum_{\omega \in \F_q^*}\chi(\omega)\sum_{ \alpha + \beta = 1}  N(\alpha,\beta) \\
        ={}& \sum_{\omega \in \F_q^*}\chi(\omega)\sum_{ \alpha  \in \F_q} N(\alpha,1-\alpha) \\
        ={}& N(0,1)\sum_{\omega \in \F_q^*}\chi(\omega) + N(1,0)\sum_{\omega \in \F_q^*}\chi(\omega) + \sum_{ \alpha  \in \F_q^*:\alpha \neq 1} N(\alpha,1-\alpha) \sum_{\omega \in \F_q^*}\chi(\omega).
    \end{align*}
    By argument of symmetry, $N(0,1) = N(1,0)$, so we simplify as follows.
    \begin{align*}
        \sum_{\omega \in \F_q^*}\sum_{ \alpha + \beta = \omega} \chi(\omega) N(\alpha,\beta) ={}& N(0,1)\sum_{\omega \in \F_q^*}\chi(\omega) + N(1,0)\sum_{\omega \in \F_q^*}\chi(\omega) + \sum_{ \alpha  \in \F_q^*:\alpha \neq 1} N(\alpha,1-\alpha) \sum_{\omega \in \F_q^*}\chi(\omega) \\
        ={}& 2N(0,1)\sum_{\omega \in \F_q^*}\chi(\omega) + \sum_{ \alpha  \in \F_q^*:\alpha \neq 1} N(\alpha,1-\alpha) \sum_{\omega \in \F_q^*}\chi(\omega) \\
        ={}& -2N(0,1) - \sum_{\alpha \in \F_q^*: \alpha \neq 1} N(\alpha, 1-\alpha).
    \end{align*}
    By analogous reasoning we have already seen, consider the following cases for $N(0,1)$. 

    \bigskip

    \textbf{Case I}. We consider the matrices of the following form: $\left[\begin{array}{@{\mkern3mu} rrr @{\mkern7mu}}
                  0 & * & *  \\ 
                  * & 1 & *  \\
                  * & * & *  \\
                \end{array}\right]$.
    We then divide into further subcases as the first entry of the second row can be zero or non-zero.

    \textbf{Case I}.\textbf{a}.  $\left[\begin{array}{@{\mkern3mu} rrr @{\mkern7mu}}
                  0 & * & *  \\ 
                  0 & 1 & *  \\
                  * & * & *  \\
                \end{array}\right]$.
    There are $(q-1)q^2(q^3-q^2)$ possible matrices of this form for this subcase. Note the second column will never be a scalar multiple of the first column due to the second entries of the first and second column, i.e., $t\cdot 0 \neq 1$ for any $t \in \F_q$. 

    \textbf{Case I}.\textbf{b}. For $\gamma \in \F_q$ such that $\gamma \neq 0$, we have matrices of the form: $\left[\begin{array}{@{\mkern3mu} rrr @{\mkern7mu}}
                  0 & * & *  \\ 
                  \gamma & 1 & *  \\
                  * & * & *  \\
                \end{array}\right]$.
    Since $\gamma$ is non-zero, there are $[q(q-1)](q^2-1)(q^3-q^2)$ possible matrices of this form for this subcase. 
    
    \bigskip
    Now we will consider the possible cases for the remaining summation $\sum_{\alpha \in \F_q^*: \alpha \neq 1} N(\alpha, 1-\alpha)$. 

    \textbf{Case II}. We consider the matrices of the following form: $\left[\begin{array}{@{\mkern3mu} rcr @{\mkern7mu}}
                  \alpha & * & *  \\ 
                  * & 1-\alpha & *  \\
                  * & * & *  \\
                \end{array}\right]$.
    We then divide into further subcases as the first entry of the second row can be zero or non-zero.

    \textbf{Case II}.\textbf{a}. $\left[\begin{array}{@{\mkern3mu} rcr @{\mkern7mu}}
                  \alpha & * & *  \\ 
                  0 & 1-\alpha & *  \\
                  * & * & *  \\
                \end{array}\right]$.
    There are $[q(q-2)](q^2)(q^3-q^2)$ possible matrices of this form for this subcase. As reasoned similarly above, we note that the second column will never be a scalar multiple of the first column. 

    \textbf{Case II}.\textbf{b}. For $\gamma \in \F_q$ such that $\gamma \neq 0$, we have matrices of the form: $\left[\begin{array}{@{\mkern3mu} rcr @{\mkern7mu}}
                  \alpha & * & *  \\ 
                  \gamma & 1-\alpha & *  \\
                  * & * & *  \\
                \end{array}\right]$.
    Since $\gamma$ is non-zero, there are $[(q-2)(q-1)q](q^2-1)(q^3-q^2)$ possible matrices of this form for this subcase. 

    \medskip
    Hence, the sum is re-expressed as follows.
    \begin{align*}
        \sum_{\omega \in \F_q^*}\sum_{ \alpha + \beta = \omega} \chi(\omega) N(\alpha,\beta) ={}& -2N(0,1) - \sum_{\alpha \in \F_q^*: \alpha \neq 1} N(\alpha, 1-\alpha) \\
        ={}& -2[(q-1)q^2(q^3-q^2) + q(q-1)(q^2-1)(q^3-q^2)] \,\,- \\
        &\hspace{20mm}[q(q-2)(q^2)(q^3-q^2) + (q-2)(q-1)q(q^2-1)(q^3-q^2)] \\
        ={}& -q^8 + q^7 +q^6 -q^4.
    \end{align*}

    Thus, the third eigenvalue $\lambda_{A_2}$ is 
    \begin{align*}
        \lambda_{A_2} ={}& \sum_{\alpha,\beta \in \F_q: \alpha + \beta = 0}N(\alpha,\beta) \,\,\,+ \sum_{\omega \in \F_q^*}\sum_{ \alpha + \beta = \omega} \chi(\omega) N(\alpha,\beta) \\
        ={}& (q^8 - q^7 - q^6 + 2 q^4 - q^3) + (-q^8 + q^7 +q^6 -q^4) \\
        ={}& q^3(q-1) = q^4-q^3.
    \end{align*}


    \smallskip
    \noindent\textbf{Derivation of $\lambda_{A_3}$.} At this point, we may be tempted to continue with the same strategy, employing brute force to compute the final eigenvalue $\lambda_{A_3}$. However, since we are computing the eigenvalues of the adjacency matrix of the unit-graph--- which is $\text{Cay}(\text{Mat}_{3}(\F_q), \text{GL}_{3}(\F_q))$---on $\text{Mat}_3(\F_q)$, then we can consider the adjacency matrix itself. Rather than carrying out a comparably long direct count for $\lambda_{A_3}$, we instead exploit the trace--eigenvalue identity from linear algebra: if $\mathbb{A}$ has eigenvalues $\mu_i$ with multiplicities $m_i$, then $\sum_i m_i\mu_i = \tr(\mathbb{A})$ (\hyperref[prop:PropA1]{Proposition A.1}). By \hyperref[prop:PropA2]{Proposition A.2}, since the unit-graph $\text{Cay}(\text{Mat}_{3}(\F_q), \text{GL}_{3}(\F_q))$ on $\text{Mat}_{3}(\F_q)$ is a simple graph, then the adjacency matrix for this graph has a zero diagonal given that simple Cayley digraphs have no loops, indicating $a_{ii} = 0$. In this case since $\text{tr}(\mathbb{A}) = 0$, by \hyperref[prop:PropA1]{Proposition A.1}, \[m_0\lambda_{A_0} + m_1\lambda_{A_1} + m_2\lambda_{A_2} + m_3\lambda_{A_3} = 0,\] where $m_i$ for $i=0,\ldots, 3$ is the multiplicity of $\lambda_{A_i}$, i.e., $m_i$ is the number of matrices in $\text{Mat}_{3}(\F_q)$ with rank $i$ (since matrices of the same rank correspond to the same eigenvalue as already mentioned).
    To that end, observe that clearly, the only matrix of rank $0$ in $\text{Mat}_{3}(\F_q)$ is the zero matrix $[0]_{3\times3}$ and as such $m_0 = 1$. We also note that rank $3$ matrices in $\text{Mat}_{3}(\F_q)$ possess three pivot points, and as such by the Invertible Matrix Theorem, this means we are counting all the invertible matrices in $\text{Mat}_{3}(\F_q)$ which is of course the size of $\text{GL}_3(\F_q)$, i.e., by Equation 1 of \cite{KarabulutUnitGraphs},
    \[m_3 = \abs{\text{GL}_3(\F_q)} = (q^3-1)(q^3-q)(q^3-q^2) = q^{9}-q^{8}-q^{7}+q^{5}+q^{4}-q^{3}.\] 
    In order to calculate $m_1$ and $m_2$, consider Landsberg's formula (\cite[p.~455]{LidlNiederreiterFiniteFields}) for the number of $n\times n$ matrices over $\F_q$ of rank $i$:
    \[m_i = q^{\frac{i(i-1)}{2}}\prod_{k=0}^{i-1}\frac{(q^{n-k}-1)^2}{q^{k+1}-1}.\] Then 
    \[m_1 = \frac{(q^3-1)^2}{q-1} = (q^3-1)(q^2+q+1) = q^{5}+q^{4}+q^{3}-q^{2}-q-1,\] 
    and 
    \[m_2 = q\left[\frac{(q^3-1)^2}{q-1}\cdot\frac{(q^2-1)^2}{q^2-1}\right] = q(q^3-1)(q^2+q+1)(q^2-1) = q^{8}+q^{7}-2q^{5}-2q^{4}+q^{2}+q. \]
    Returning to the equation, $m_0\lambda_{A_0} + m_1\lambda_{A_1} + m_2\lambda_{A_2} + m_3\lambda_{A_3} = 0$, we solve for the fourth eigenvalue $\lambda_{A_3}$: 
    \begin{align*}
        \lambda_{A_3} ={}& -\frac{1}{m_3}\left(m_0\lambda_{A_0} + m_1\lambda_{A_1} + m_2\lambda_{A_2}\right) \\
        ={}& - \frac{1}{(q^3-1)(q^3-q)(q^3-q^2)}\biggl[1\cdot(q^3-1)(q^3-q)(q^3-q^2) \,\,+ \\
        {}&\hspace{5mm} \frac{(q^3-1)^2}{q-1}\cdot[(q^2-1)(q^3-q)(q^3-q^2) - q^2(q^3 -q)(q^3-q^2)] \,\,+ \\
        {}&\hspace{6mm} [q(q^3-1)(q^2+q+1)(q^2-1)]\cdot(q^4-q^3) \biggr] \\
        ={}& -q^3.
    \end{align*}
\end{proof}

\section{Spectrum with Multiplicities}

We now make precise the multiplicity counts alluded to in the proof above. Since the eigenvalues are indexed by rank representatives, these rank counts are exactly the multiplicities of the corresponding eigenvalues.
Let $m_i=\abs{\{A\in \Mat_3(\F_q):\mathrm{rank}(A)=i\}}$. Then we summarize, 
\[
m_0=1 \qquad \quad m_3=\abs{\GL_3(\F_q)}=q^{9}-q^{8}-q^{7}+q^{5}+q^{4}-q^{3}
\]
while
\begin{align*}
m_1 ={}& \frac{(q^3-1)^2}{q-1}
     = q^{5}+q^{4}+q^{3}-q^{2}-q-1, \,\,\text{ and}\\
m_2 ={}& q(q^3-1)(q^2+q+1)(q^2-1)
     = q^{8}+q^{7}-2q^{5}-2q^{4}+q^{2}+q.
\end{align*}

Hence, the final spectrum of the unit-graph on $\Mat_3(\F_q)$ is
\begin{align*}
\bigl(\lambda_{A_0},m_0\bigr)
&=\bigl(q^{9}-q^{8}-q^{7}+q^{5}+q^{4}-q^{3},\,\,1\bigr),\\[1mm]
\bigl(\lambda_{A_1},m_1\bigr)
&=\bigl(-q^6 + q^5 + q^4 - q^3,\,\,
q^{5}+q^{4}+q^{3}-q^{2}-q-1\bigr),\\[1mm]
\bigl(\lambda_{A_2},m_2\bigr)
&=\bigl(q^4-q^3,\,\,
q^{8}+q^{7}-2q^{5}-2q^{4}+q^{2}+q\bigr), \text{ and}\\[1mm]
\bigl(\lambda_{A_3},m_3\bigr)
&=\bigl(-q^3,\,\,
q^{9}-q^{8}-q^{7}+q^{5}+q^{4}-q^{3}\bigr).
\end{align*}

\section{Spectral Gap Application}

As a consequence of the spectral computation, the Spectral Gap Theorem (\hyperref[theo:theoremA3]{Theorem A.3}) for Cayley digraphs reveals important results. In our setting $H=(\Mat_3(\F_q), +)$, $S=\GL_3(\F_q)$, and since $\abs{\lambda_{A_1}} = q^{6} + O(q^5)$ dominates the other magnitudes of the non-trivial eigenvalues $\abs{\lambda_{A_2}} = q^{4} + O(q^3)$ and $\abs{\lambda_{A_3}} = q^3$,  
the maximum non-trivial character sum is $\abs{\lambda_{A_1}}$. Consequently, substituting the spectrum obtained above into the general spectral gap bound yields the Spectral Graph Theorem applied to this setting, expressed in \hyperref[theo:theorem1.2]{Theorem 1.2}. Using polynomial long division, it can be shown that the spectral bound is as follows.

\begin{proof}
    Consider the Cayley digraph $\Gamma = \text{Cay}((\text{Mat}_{3}(\F_q), +),\text{GL}_{3}(\F_q))$. By applying \hyperref[theo:theoremA3]{Theorem A.3} on $\Gamma$, this yields
    \begin{align*}
        n_* = \frac{n}{\abs{S}}\left(\max_{1\leq i \leq n} \norm{\sum_{s \in S} \chi_i(s)}\right) = \frac{\abs{\text{Mat}_{3}(\F_q)}}{\abs{\text{GL}_{3}(\F_q)}}\norm{\lambda_{A_1}} ={}& \frac{q^9\,\abs{-q^6 + q^5 + q^4 - q^3}}
        {(q^3-1)(q^3-q)(q^3-q^2)} \\ 
        ={}& \frac{q^9}{q^3-1} 
        < q^6+q^3+2. 
    \end{align*}
    Consequently, if $q^6+q^3+2 < \sqrt{\abs{X}\abs{Y}}$, then the results follow from the Spectral Gap Theorem for Cayley digraphs (\hyperref[theo:theoremA3]{Theorem A.3}).
\end{proof}

The Spectral Gap Theorem for this Cayley digraph shows if two subsets of vertices in $\mathrm{Mat}_3(\F_q)$ are sufficiently large, then there exists a directed edge from one subset to the other; equivalently, there are matrices in the two subsets whose difference lies in $\mathrm{GL}_3(\F_q)$. In particular, any sufficiently large subset of $\mathrm{Mat}_3(\F_q)$ contains two distinct matrices whose difference has nonzero determinant. This spectral gap implies that large vertex sets cannot avoid each other and must be connected by at least one edge.

\appendix
\section{}

\begin{proposition}[Multiplicity Form of the Trace-Eigenvalue Property] \label{prop:PropA1}
    Let $A \in \C^{n \times n}$. Suppose $A$ has $k$ distinct eigenvalues $\mu_1, \ldots, \mu_k$ with corresponding multiplicities $m_1, \ldots, m_k$. Then the eigenvalues counted with their respective multiplicities sum to \[\sum_{i=1}^{k}m_i\mu_i = \text{tr}(A).\] In particular, if $A$ has a zero diagonal for which $\text{tr}(A) = 0$, then \[\sum_{i=1}^{k}m_i\mu_i = 0.\] 
\end{proposition}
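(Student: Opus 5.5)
The plan is to prove the trace identity by triangularizing $A$ over $\C$. Since $\C$ is algebraically closed, the characteristic polynomial $p_A(t)=\det(tI-A)$ splits as
\[
p_A(t)=\prod_{i=1}^{k}(t-\mu_i)^{m_i},
\]
where $m_i$ is the algebraic multiplicity of $\mu_i$.

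The main step is to compare the coefficient of $t^{n-1}$ on the two sides.
\begin{itemize}
\item On the product side, expanding gives the coefficient $-\sum_{i} m_i\mu_i$.
\item On the determinant side, use the Leibniz expansion of $\det(tI-A)$. The only permutation contributing a power $t^{n-1}$ is the identity, because any other permutation moves at least two indices and so picks up at most $n-2$ diagonal factors. The identity term is $\prod_j (t-a_{jj})$, whose $t^{n-1}$ coefficient is $-\sum_j a_{jj}=-\mathrm{tr}(A)$.
\end{itemize}
Equating the two coefficients gives $\sum_i m_i\mu_i=\mathrm{tr}(A)$. As an alternative route, Schur triangularization $A=UTU^{*}$ with $T$ upper triangular and $\mathrm{tr}(UTU^{*})=\mathrm{tr}(T)$ yields the same result.

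The ``in particular'' clause then follows immediately: a zero diagonal means $\mathrm{tr}(A)=\sum_j a_{jj}=0$.

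The main subtlety is the notion of multiplicity. The identity holds for algebraic multiplicity. In our application the adjacency matrix $\mathbb{A}$ is real symmetric, and hence diagonalizable, so algebraic and geometric multiplicities coincide. It is also diagonalized by the characters, so the multiplicity of each eigenvalue is the number of characters $\chi_A$ attaining it.

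One further point needs care. In Proposition 1.1 the values $\lambda_{A_i}$ were grouped by rank, so the identity as applied there requires rank classes with equal values to be merged. This is harmless, because the sum $\sum_i m_i\mu_i$ is unchanged whether equal eigenvalues are grouped or listed separately.
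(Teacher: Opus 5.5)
Your proof is correct, but your main argument is not the paper's. The paper uses the route you list as an alternative. It writes $A=UTU^{*}$ by Schur decomposition, reads off the eigenvalues with multiplicity from $\det(T-\lambda I_n)=\prod_j(t_{jj}-\lambda)$, and concludes via $\mathrm{tr}(UTU^{*})=\mathrm{tr}(TU^{*}U)=\mathrm{tr}(T)$.

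Your argument instead compares the $t^{n-1}$ coefficients of $\det(tI-A)$ and $\prod_i(t-\mu_i)^{m_i}$, noting via Leibniz that only the identity permutation reaches degree $n-1$. This is more elementary. It needs only that the characteristic polynomial splits over $\C$, not a unitary triangularization. Because it works with $A$'s own characteristic polynomial, it also bypasses a step the paper leaves implicit: that $A$ and $T$ have the same eigenvalues with the same algebraic multiplicities. And it makes plain that the $m_i$ are algebraic multiplicities.

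The triangularization route, in return, displays the eigenvalues concretely on the diagonal of $T$. It also gives $\mathrm{tr}(A^k)=\sum_i m_i\mu_i^k$ for every $k$ at no extra cost. Your first sentence announces a triangularization that your main argument never performs; that is only a wording slip.

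Your closing remarks address something the paper passes over. The paper applies the identity with $m_i$ equal to the number of rank-$i$ matrices before knowing that $\lambda_{A_3}$ differs from the other three values. It also equates those rank counts with multiplicities without comment. Your two observations are exactly what make that application legitimate: the characters form an eigenbasis of the symmetric matrix $\mathbb{A}$, and the weighted sum is unchanged by grouping equal eigenvalues.
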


\begin{proof}
    By the Schur decomposition (or by triangularization), there exists a unitary matrix $U$ over $\C$ such that $A = UTU^*$ with upper triangle matrix $T = [t]_{n\times n}$ where $U^*$ is the conjugate transpose of $U$. For eigenvalue $\lambda$, if $T$ is an upper triangular matrix, then $T-\lambda I_n$ is clearly also upper triangular with diagonal entries $t_{jj} - \lambda$. In a triangular matrix, the determinant is the product of its diagonal entries, so \[\det(T-\lambda I_n) = \prod_{j=1}^n(t_{jj}-\lambda).\] Since the eigenvalues of $T$ are exactly the $\lambda$ for which $\det(T-\lambda I_n) = 0$, then by transitivity, $\lambda = t_{jj}$ for some $j = 1, \ldots, n$ as a root of the characteristic polynomial. Hence, the diagonal entries of $T$ are the eigenvalues of $T$ counted with repetition, appearing exactly as many times as its multiplicity. Observe that by the cyclic property of the trace function $\text{tr}$, we have \[\text{tr}(A)= \text{tr}(UTU^*)= \text{tr}(TU^*U)= \text{tr}(T I_n)= \text{tr}(T)= \sum_{j=1}^{n}t_{jj}.\] Grouping equal diagonal entries in $T$ by distinct eigenvalues $\mu_{i}$ with their corresponding multiplicities yields \[\text{tr}(A)= \sum_{j=1}^{n}t_{jj}= \sum_{i=1}^{k}m_i\mu_i.\] Of course, if $A$ has a zero diagonal for which $\text{tr}(A) =0$, then the second result trivially follows.
\end{proof}

\begin{proposition}[Simpleness of the Cayley Digraph] \label{prop:PropA2}
    Let $\Gamma = \text{Cay}(H, S)$ be a Cayley digraph (where by definition $H$ is a group and $S$ is a subset of $H$ that is closed under taking inverses and does not contain the identity of $H$). Then every Cayley digraph $\Gamma$ is simple, possessing no multi-edges or loops.
\end{proposition}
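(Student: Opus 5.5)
We need to prove Proposition A.2: every Cayley digraph $\Gamma=\mathrm{Cay}(H,S)$, with $S$ closed under inverses and not containing the identity, is simple.

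The plan is to check the three defining features of a simple graph (no loops, no multiple edges, undirectedness) directly from the edge rule of $\mathrm{Cay}(H,S)$: there is an edge from $x$ to $y$ if and only if $y-x\in S$. Throughout, $H$ is written additively as in the paper, with identity $0_H$.

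\textbf{Step 1 (no loops).} A loop at a vertex $x$ would mean $x-x=0_H\in S$. This contradicts the standing hypothesis that $S$ does not contain the identity, so no vertex carries a loop.

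\textbf{Step 2 (no multiple edges).} The edge set is defined as a set of ordered pairs $(x,y)$ determined solely by the membership condition $y-x\in S$. Hence for fixed $x,y$ there is either exactly one edge from $x$ to $y$ or none, and multiple edges cannot occur. This is also why the adjacency matrix of $\Gamma$ is a $0$--$1$ matrix.

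\textbf{Step 3 (undirectedness).} Suppose $y-x\in S$. Then $x-y=-(y-x)$ is the inverse of an element of $S$, so it lies in $S$ because $S$ is closed under inverses. Thus an edge from $x$ to $y$ forces an edge from $y$ to $x$. The directed edges therefore pair up into unordered edges $\{x,y\}$, and by Step 1 these have $x\neq y$, matching the paper's definition of a graph.

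The only point needing care is the application to the paper's setting $H=(\Mat_3(\F_q),+)$, $S=\GL_3(\F_q)$. We must confirm that $S$ meets the hypotheses, since $S$ is not a subgroup of $H$.
\begin{itemize}
\item The additive inverse of $B\in\GL_3(\F_q)$ is $-B$, and $\det(-B)=(-1)^3\det B\neq 0$, so $-B\in\GL_3(\F_q)$.
\item The additive identity is the zero matrix, which is singular, so it is not in $S$.
\end{itemize}
This verification is the step I would treat as the main obstacle. The paper's definition of Cayley digraph loosely calls $S$ a "group of units," so it must be made explicit that the relevant inverse is additive inversion in $H$, not multiplicative inversion. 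With that settled, the rest follows immediately from the definitions.
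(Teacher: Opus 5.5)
Your proof is correct and follows the same lines as the paper's. Loops are excluded because $x-x=0_H\notin S$. Multiple edges are excluded because the only element of $S$ that can join $x$ to $y$ is $y-x$: the paper phrases this as the cancellation $s_1h_1=h_2=s_2h_1\Rightarrow s_1=s_2$, which is the relevant form if edges are labelled by elements of $S$, while you read it off from the edge set being defined by a membership condition.

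Your Step 3 goes beyond the paper. Its proof never uses the hypothesis that $S$ is closed under inverses and never checks undirectedness, even though the paper's own definition of a simple graph requires it, so your version is the more complete one. Your check that $-B\in\GL_3(\F_q)$ and $0\notin\GL_3(\F_q)$ also rightly makes explicit that the relevant inverse is the additive one.
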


\begin{proof}
    In this proof, we will show that every Cayley digraph $\Gamma$ is simple, i.e., there do not exist multi-edges or loops in a Cayley digraph $\Gamma$. 

    \indent I. Suppose there exists a multi-edge in $\Gamma$, i.e., there are at least two distinct edges from an arbitrary vertex $h_1 \in H$ to another arbitrary vertex $h_2 \in H$. By definition then, $h_2 h_1^{-1} =s_1$  and $h_2h_1^{-1} = s_2$ for some elements $s_1, s_2 \in S$ where $s_1 \neq s_2$. However, by group axioms, $s_1h_1 = h_2 = s_2h_1$, indicating $s_1=s_2$. Then it must be that every edge between any two vertices is unique, i.e., there do not exist multi-edges in a Cayley digraph.

    \indent II. Let $h$ be a vertex of $\Gamma$. Observe that $hh^{-1}$ yields the identity of $H$ which by definition/construction of a Cayley digraph (as noted in the proposition) is not an element of $S$. Hence, by definition, there does not exist an edge from a vertex to itself in a Cayley digraph.  
\end{proof}

\begin{theorem}[Spectral Gap Theorem For Cayley Digraphs] \label{theo:theoremA3}
    Let $\text{Cay}(H,S)$ be a Cayley digraph of order $n$. Let $\{\chi_i\}_{i=0,1,\ldots,n}$ be the set of all distinct characters on $H$ with the trivial character $\chi_0$. Define \[n_* = \frac{n}{\abs{S}}\left(\max_{1\leq i \leq n} \norm{\sum_{s \in S} \chi_i(s)}\right),\] and let $X,Y$ be subsets of vertices of $\text{Cay}(H,S)$. If $\sqrt{\abs{X}\abs{Y}} > n_*$, then there exists a directed edge between a vertex in $X$ and a vertex in $Y$. In particular, if $\abs{X} > n_*$, then there exists at least two distinct vertices of $X$ with a directed edge between them.   
\end{theorem}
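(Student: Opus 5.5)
The statement to prove is Theorem A.3, the Spectral Gap Theorem for Cayley digraphs. It is a general fact about an abelian group $H$ with a subset $S$, and I would prove it by the standard expander-mixing argument, using characters as an explicit eigenbasis.

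\textbf{Setup.} Let $\mathbb{A}$ be the adjacency matrix of $\mathrm{Cay}(H,S)$, with $\mathbb{A}_{xy}=1$ exactly when $y-x\in S$. For each character $\chi_i$ on $H$ (viewed as a vector in $\C^H$),
\[
(\mathbb{A}\chi_i)(x)=\sum_{s\in S}\chi_i(x+s)=\Big(\sum_{s\in S}\chi_i(s)\Big)\chi_i(x).
\]
So the $n$ characters are eigenvectors with eigenvalues $\lambda_i=\sum_{s\in S}\chi_i(s)$; this is Theorem A.4 in the abelian case. By orthogonality of characters, the vectors $\chi_i/\sqrt n$ form an orthonormal basis of $\C^H$. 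Hence $\mathbb{A}$ is a normal matrix, unitarily diagonalized by the characters, even though it need not be symmetric. The trivial character has eigenvalue $\lambda_0=\abs{S}$.

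\textbf{Counting edges.} Let $E(X,Y)=\sum_{x\in X,y\in Y}\mathbb{A}_{xy}=\langle 1_X,\mathbb{A}1_Y\rangle$, the number of directed edges from $X$ to $Y$. Expand $1_Y=\sum_i \hat{1}_Y(i)\chi_i$ with $\hat{1}_Y(i)=\frac1n\langle 1_Y,\chi_i\rangle$, and do the same for $1_X$. This gives
\[
E(X,Y)=n\sum_i \lambda_i\,\overline{\hat 1_X(i)}\,\hat 1_Y(i).
\]
The $i=0$ term equals $n\abs{S}\cdot\frac{\abs X}{n}\cdot\frac{\abs Y}{n}=\frac{\abs S\abs X\abs Y}{n}$. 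For the remaining terms, bound each $\abs{\lambda_i}$ by $\lambda_*=\max_{i\ge1}\abs{\lambda_i}$, then apply Cauchy--Schwarz and Parseval ($n\sum_i\abs{\hat 1_X(i)}^2=\abs X$, and likewise for $Y$). This bounds the error by $\lambda_*\sqrt{\abs X\abs Y}$. One can refine the estimate by subtracting the $i=0$ mass, but it is not needed.

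\textbf{Conclusion.} We obtain
\[
E(X,Y)\ge \frac{\abs S\abs X\abs Y}{n}-\lambda_*\sqrt{\abs X\abs Y},
\]
which is strictly positive as soon as $\sqrt{\abs X\abs Y}>\frac{n\lambda_*}{\abs S}=n_*$. A positive edge count means some $x\in X$, $y\in Y$ have $y-x\in S$. For the final clause, take $Y=X$. Since $0\notin S$ (Proposition A.2), any edge $x\to y$ inside $X$ joins two distinct vertices.

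\textbf{Main obstacle.} The only delicate point is justifying the spectral expansion for a \emph{directed} graph, where $\mathbb{A}$ need not be symmetric. The fix is the observation above: the characters form a common orthonormal eigenbasis because $H$ is abelian. That replaces any appeal to the symmetric spectral theorem, and the rest is routine bookkeeping of Fourier normalizations.
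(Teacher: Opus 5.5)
Your proof is correct. The paper never proves Theorem A.3: it is stated in the appendix as a known result, as is Theorem A.4. So there is no in-paper argument to compare against. What you supply is the standard Fourier (expander-mixing) proof that the paper takes for granted, and the bookkeeping checks out:
\begin{itemize}
\item with $\hat 1_X(i)=\frac1n\sum_{x\in X}\overline{\chi_i(x)}$ one gets $E(X,Y)=n\sum_i\lambda_i\overline{\hat 1_X(i)}\,\hat 1_Y(i)$;
\item the trivial term is $\abs{S}\abs{X}\abs{Y}/n$;
\item Parseval gives $\sum_i\abs{\hat 1_X(i)}^2=\abs{X}/n$, so the nontrivial terms contribute at most $\lambda_*\sqrt{\abs{X}\abs{Y}}$ in absolute value.
\end{itemize}

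Your route also gives more than the bare statement. You get an edge directed specifically from $X$ to $Y$, and the quantitative bound $E(X,Y)\ge \abs{S}\abs{X}\abs{Y}/n-\lambda_*\sqrt{\abs{X}\abs{Y}}$.

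It also makes explicit two hypotheses the statement leaves implicit.
\begin{itemize}
\item $H$ must be abelian, consistent with the paper's Section 3 definition of characters. This is genuinely needed. Take $H=S_3$ and the inverse-closed set $S=\{(12),(13),(123),(132)\}$. The only nontrivial linear character is the sign, which sums to $0$ over $S$, so $n_*=0$. Yet $X=\{e\}$ and $Y=\{(23)\}$ span no edge.
\item $0\notin S$ is exactly what the final clause uses, as you note.
\end{itemize}

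A harmless slip in the statement: $\{\chi_i\}_{i=0,\ldots,n}$ lists $n+1$ characters, but there are $n$, which is what you implicitly use.
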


\begin{theo}[Eigenvectors and Eigenvalues for Cayley Digraphs from Characters] \label{theo:theoremA4}
Let $\Gamma=\mathrm{Cay}(H,S)$ and let $\mathbb{A}$ be its adjacency matrix.
For any character $\chi$ on $H$, the vector $(\chi(h))_{h\in H}$ is an eigenvector of $\mathbb{A}$
with eigenvalue
\[
\sum_{s\in S}\chi(s).
\]
In particular, the trivial character yields the all-ones eigenvector $\textbf{1}$ with eigenvalue $\abs{S}$.
\end{theo}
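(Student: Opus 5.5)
The plan is a direct computation of $\mathbb{A}v$, where $v=(\chi(h))_{h\in H}$, using only the definition of the adjacency matrix of $\mathrm{Cay}(H,S)$ and the homomorphism property of $\chi$. Throughout I take $H$ abelian and written additively, as in the paper's setting $H=(\Mat_3(\F_q),+)$. I also use the convention fixed in Section 2: $a_{xy}=1$ exactly when $y-x\in S$.

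\textbf{Step 1 (row-by-row expansion).} Fix a vertex $x\in H$. By definition of $\mathbb{A}$,
\[
(\mathbb{A}v)_x=\sum_{y\in H}a_{xy}\chi(y)=\sum_{\substack{y\in H\\ y-x\in S}}\chi(y).
\]

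\textbf{Step 2 (reindex by the generator).} The map $y\mapsto s=y-x$ is a bijection from $\{y\in H: y-x\in S\}$ onto $S$, with inverse $s\mapsto x+s$. Substituting and using $\chi(x+s)=\chi(x)\chi(s)$ gives
\[
(\mathbb{A}v)_x=\sum_{s\in S}\chi(x+s)=\chi(x)\sum_{s\in S}\chi(s).
\]
This holds for every $x$, so $\mathbb{A}v=\bigl(\sum_{s\in S}\chi(s)\bigr)v$.

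\textbf{Step 3 (nonzero eigenvector).} Since $\chi$ takes values in $S^1$, every coordinate of $v$ has modulus $1$; in particular $v_{0_H}=\chi(0_H)=1$. So $v\neq 0$ and $v$ is a genuine eigenvector with eigenvalue $\sum_{s\in S}\chi(s)$.

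\textbf{Step 4 (trivial character).} For $\chi_0$ the vector $v$ is the all-ones vector $\mathbf{1}$, and the eigenvalue is $\sum_{s\in S}1=\abs{S}$.

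I would also remark, though the statement does not require it, that distinct characters give orthogonal vectors by the standard orthogonality relations (Lidl--Niederreiter, Theorem 5.5 and its consequences). These $\abs{H}$ vectors therefore form an eigenbasis, which is what justifies reading multiplicities off the character count in Section 5.

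The only delicate point is bookkeeping rather than mathematics: the edge orientation must match the order of subtraction. With the opposite convention ($x-y\in S$) one obtains $\chi(x)\sum_{s\in S}\overline{\chi(s)}$ instead, which is the sum over $-S$. In the present application $S=\GL_3(\F_q)=-S$, so the two versions agree, and I would note this explicitly.
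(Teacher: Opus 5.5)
Your computation is correct and complete. With the paper's convention $a_{xy}=1$ iff $y-x\in S$, the substitution $y=x+s$ and the multiplicativity of $\chi$ give $(\mathbb{A}v)_x=\chi(x)\sum_{s\in S}\chi(s)$, and $\chi(0_H)=1$ ensures $v\neq 0$.

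The paper itself supplies no proof of Theorem A.4; it states the result in the appendix and defers to Babai's character-theoretic description of Cayley-graph spectra. Babai's theorem is more general. It covers non-abelian $H$ when $S$ is a union of conjugacy classes, attaching to each irreducible character $\chi$ the eigenvalue $\frac{1}{\chi(1)}\sum_{s\in S}\chi(s)$ with multiplicity $\chi(1)^2$. For abelian $H$, where all irreducible characters are linear, it reduces to exactly your statement. So your elementary verification is the natural self-contained proof for the setting actually used here.

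Your two side remarks are worth keeping, since the paper leaves both implicit. First, orthogonality of distinct characters makes the $\abs{H}$ vectors $(\chi(h))_{h\in H}$ an eigenbasis. That is what licenses identifying the multiplicities with the rank counts $m_i$ in Section 5, and hence the trace identity used to obtain $\lambda_{A_3}$; one eigenvector per character does not by itself control algebraic multiplicities. Second, the orientation check matters: under the opposite convention the eigenvalue would be $\sum_{s\in S}\overline{\chi(s)}$. This agrees with the stated formula here only because $\GL_3(\F_q)=-\GL_3(\F_q)$.
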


\section*{Acknowledgment}
The authors thank Cade Denis O'keefe, an undergraduate student at CSUS, for valuable discussions that contributed to the initial ideas for this project.

\bibliographystyle{amsplain}
\bibliography{references}

\textit{Email address}: \href{mailto:demiroglu@csus.edu}{demiroglu@csus.edu}, \href{mailto:hespinosa@csus.edu}{hespinosa@csus.edu}

\textsc{Department of Mathematics \& Statistics, California State University, Sacramento}

\end{document}